\numberwithin{equation}{section}
\newtheorem{thm}{Theorem}[section]
\newtheorem{lem}[thm]{Lemma}
\newtheorem{prop}[thm]{Proposition}
\newtheorem{coro}[thm]{Corollary}
\newtheorem{defi}[thm]{Definition}
\newtheorem{remark}[thm]{Remark}
\newcommand{\me}{\mathrm{e}}
\newcommand{\mi}{\mathrm{i}}
\newcommand{\dif}{\mathop{}\!\mathrm{d}}
\newcommand{\cone}{\mathrm{Cone}}
\newcommand{\diam}{\operatorname{diam}}
\newcommand{\crf}{\mathfrak{c}}
\begin{document}
\title[Sharp hyperbolic Strichartz estimate]{On sharp Strichartz estimate for hyperbolic Schr\"{o}dinger equation on $\mathbb{T}^3$}

\author{Baoping Liu}
\address{Department of Mathematics, School of Mathematical Sciences\\
Peking University\\
Beijing\\ China}
 \email{baoping@math.pku.edu.cn}

  \author{Xu Zheng}
 \address{Department of Mathematics, School of Mathematical Sciences\\
Peking University\\
Beijing\\ China}
 \email{xuzheng-math@stu.pku.edu.cn}

 \thanks{2020 \textit{AMS Mathematics Subject Classification}.  35Q55.}
\thanks{Keywords:   Hyperbolic Schr\"{o}dinger equation, Strichartz estimate, local well-posedness.}
\thanks{
The authors are supported by the NSF of China (No. 12571254, 12341102).}
\begin{abstract}
    We prove the sharp Strichartz estimate for hyperbolic Schr\"{o}dinger equation on $\mathbb{T}^3 $  via an incidence geometry approach. As application, we obtain  optimal local well-posedness of nonlinear hyperbolic Schr\"{o}dinger equations.

\end{abstract}
\maketitle

\section{Introduction}
The question of Strichartz estimates for Schr\"{o}dinger equation on tori was first addressed by Bourgain~\cite{Bourgain1993}.
 Later, Bourgain-Demeter~\cite{bour2015} proved the full range Strichartz estimates with  $N^\varepsilon$ loss by the Fourier decoupling method:
 \begin{equation}\label{Bourgain-Demeter Strichartz estimate by decoupling}
     \| P_N \me^{\mi t\Delta} \phi \|_{{L_{t,x}^p([0,1]\times \mathbb T^d)}} \lesssim_\varepsilon N^{\frac{d}{2} - \frac{d+2}{p}+\varepsilon } \| \phi \|_{L^2_x(\mathbb T^d)}, \quad \forall p\geq p_{d},\ \forall \varepsilon>0,
 \end{equation}
 where $P_N$ denotes the Littlewood-Paley projection operator to frequency $N$ and $p_d = 2(d+2)/d$.  
 Decoupling theorems are powerful and robust tools in Fourier analysis, but the $N^\varepsilon$ loss is inherent in the proof of decoupling theorems.
 The  loss in \eqref{Bourgain-Demeter Strichartz estimate by decoupling} was  removed by Killip-Vi\c{s}an~\cite{Killip2016} for $p>p_d$. Recently, Herr-Kwak~\cite{Herr-Kwak} proved the sharp  endpoint point $L^4$ estimate on $\mathbb T^2$  
 \begin{equation}
     \| P_N \me^{\mi t\Delta} \phi \|_{L^4_{t,x}([0,1]\times \mathbb T^2)} \lesssim (\log N)^{1/4} \| \phi \|_{L^2_x(\mathbb T^2)},
 \end{equation}
which implies global existence of solutions to the cubic
 (mass-critical) nonlinear Schrödinger equation in $H^s(\mathbb T^2)$ for any $s>0$.

For the hyperbolic Schr\"{o}dinger equation, it shares the same Strichatz estimates as the elliptic one in the Euclidean case, but there is a difference on tori. In \cite{bourgain2017}, Bourgain-Demeter proved that 
  \begin{equation}
     \| P_N \me^{\mi t\Box} \phi \|_{L^p_{t,x}([0,1]\times \mathbb T^d)} \lesssim_\varepsilon N^{\mu_{d,v}(p)+\varepsilon}
     \| \phi \|_{L^2_x(\mathbb T^d)}, \quad \forall p\geq 2,\ \forall \varepsilon>0, \label{est:bourgain}
 \end{equation}

 where $\Box = \partial^2_{x_1}+\dots+\partial^2_{x_v}-\partial^2_{x_{v+1}}-\dots-\partial^2_{x_d}$,  $v\leq d/2,$ and 
 \begin{equation}
 \mu_{d,v}(p) = \max\left\{ \frac{d}{2} - \frac{d+2}{p}, \frac v2-\frac vp\right\}.\label{powerindex} 
 \end{equation}
 The factor $N^{v(\frac12-\frac1p)}$ is due to that the hyperbolic paraboloid contains a vector subspace of dimension $v$. It's a natural question to ask whether the $N^\varepsilon$ loss can be removed.

 In this paper, we consider the case $d=3$ and  prove the sharp Strichartz estimate for hyperbolic Schr\"{o}dinger equation without $N^\varepsilon$ loss.  With the notations {$\Box = \partial^2_{x_1} - \partial^2_{x_2} - \partial^2_{x_3}$} and $\mu(p)=\mu_{3,1}(p)=\max\{\frac{3}{2}-\frac{5}{p}, \frac12-\frac{1}{p}\}$, our main result reads as
follows:
 \begin{thm}\label{sharp T3 L4 hyperbolic Strichartz}
     For $\phi \in L^2(\mathbb T^3)$, we have that
     \begin{equation}
     \| P_N \me^{\mi t\Box} \phi \|_{L^p_{t,x}([0,1]\times \mathbb T^3)} \lesssim
     N^{\mu(p)} \| \phi \|_{L^2_x(\mathbb T^3)},\quad \forall p\geq 2. \label{main-est}
 \end{equation}
 \end{thm}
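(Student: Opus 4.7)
The plan is to reduce the full range $p\geq 2$ in \eqref{main-est} to the $L^4$ endpoint by interpolation, and then prove the sharp $L^4$ bound by recasting it as a lattice-point counting problem and invoking an incidence-geometric estimate.

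\textbf{Reduction to $p=4$.} Plancherel gives $\|P_N \me^{\mi t\Box}\phi\|_{L^2_{t,x}}\lesssim\|\phi\|_{L^2}$, i.e.\ $\mu(2)=0$. Since only $\lesssim N^3$ frequencies contribute, Cauchy--Schwarz yields the pointwise bound $\|P_N \me^{\mi t\Box}\phi\|_{L^\infty_{t,x}}\leq\sum_{|n|\leq N}|\hat\phi(n)|\lesssim N^{3/2}\|\phi\|_{L^2}$, i.e.\ $\mu(\infty)\leq 3/2$. Assuming the sharp $L^4$ endpoint $\|P_N \me^{\mi t\Box}\phi\|_{L^4_{t,x}}\lesssim N^{1/4}\|\phi\|_{L^2}$, Riesz--Thorin interpolation between the $(L^2,L^4)$ and $(L^4,L^\infty)$ pairs reproduces $\mu(p)=\tfrac12-\tfrac1p$ on $[2,4]$ and $\mu(p)=\tfrac32-\tfrac5p$ on $[4,\infty]$, matching \eqref{powerindex}.

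\textbf{Counting form of the $L^4$ endpoint.} Because $Q(n)\in\mathbb Z$ on $\mathbb Z^3$, the solution $\me^{\mi t\Box}\phi$ is $2\pi$-periodic in $t$, so Plancherel on $\mathbb T\times\mathbb T^3$ gives
\begin{equation*}
\|P_N \me^{\mi t\Box}\phi\|_{L^4}^4 = \sum_{(\xi,\tau)\in\mathbb Z^4}\bigg|\sum_{\substack{n_1+n_2=\xi\\ Q(n_1)+Q(n_2)=\tau\\ |n_1|,|n_2|\leq N}}\hat\phi(n_1)\hat\phi(n_2)\bigg|^2.
\end{equation*}
Writing $\xi=2a$, $n_1=a+b$, $n_2=a-b$, the phase constraint becomes $Q(b)=k:=(\tau-2Q(a))/2$, so the sharp $L^4$ bound is equivalent to the level-set $\ell^2$ inequality
\begin{equation*}
\sum_{a}\sum_{k}\bigg|\sum_{\substack{Q(b)=k\\ |a\pm b|\leq N}}\hat\phi(a+b)\hat\phi(a-b)\bigg|^2\lesssim N\,\|\phi\|_{L^2}^4
\end{equation*}
for the indefinite form $Q(b)=b_1^2-b_2^2-b_3^2$.

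\textbf{Incidence geometry and the main obstacle.} Naive Cauchy--Schwarz in $b$ replaces the right-hand factor $N$ by $\sup_k\#\{|b|\leq N:Q(b)=k\}$, which is $\sim N\log N$: the logarithm arises at $k=0$ from counting Pythagorean triples of hypotenuse $\lesssim N$ on the null cone. The plan is to bypass this step by expanding the sum over $k$ as a double sum over pairs $(b,b')$ with $Q(b)=Q(b')$; writing $s=b+b'$ and $d=b-b'$, this constraint reads $B(s,d)=0$, where $B(x,y)=x_1y_1-x_2y_2-x_3y_3$ is the polarisation of $Q$. For each nonzero $d$, $s$ is pinned to the plane $\{B(\cdot,d)=0\}\cap B_{2N}$, on which lattice-point incidences can be controlled by a Szemer\'edi--Trotter-type bound; summing over $d$ with the natural density weight $\sim N^2/|d|$, the unweighted pair count is $\lesssim N^4$, which translates to the desired $N$-factor in the level-set inequality above. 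The main technical obstacle is executing this incidence estimate with enough precision to remain tight on Knapp-type configurations (e.g.\ $\hat\phi$ concentrated on a null line $\{(j,j,0):|j|\leq N\}$), which saturate the bound $N^{1/4}$: these are precisely the inputs on which naive plane counting overshoots by a logarithm, and where the incidence-geometric refinement must restore sharpness.
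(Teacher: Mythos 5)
Your reduction to $p=4$ and the reformulation of the $L^4$ norm as a count of quadruples $\xi_1-\xi_2+\xi_3-\xi_4=0$, $h(\xi_1)-h(\xi_2)+h(\xi_3)-h(\xi_4)=0$ (equivalently $B(s,d)=0$ in your polarised variables) match the paper, and you correctly diagnose the logarithmic obstruction coming from the null cone. But the proposal stops at the point where the actual work begins, and the step you do assert is not valid as stated: knowing that the \emph{unweighted} number of solution quadruples in $([-N,N]^3)^4$ is $\lesssim N^7$ does not imply the $\ell^2$ level-set inequality for general coefficients $\hat\phi$. The inequality $\sum_{a,k}|\sum_{Q(b)=k}\hat\phi(a+b)\hat\phi(a-b)|^2\lesssim N\|\hat\phi\|_{\ell^2}^4$ fails to follow from a global count because the solutions can concentrate: e.g.\ if $\hat\phi$ is supported on a single plane with null normal, essentially every pair of points spans a degenerate quadruple, and a bound by the total count normalised by $N^{-6}$ is meaningless. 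This is precisely why the paper splits $\mathcal{Q}=\mathcal{Q}_1\cup\mathcal{Q}_2$ according to whether a side lies on the cone, proves $\Omega_2(f)\lesssim\diam(S)\|f\|^4$ directly (Proposition~\ref{number of parallelograms with side on cone}), and then needs a structure theorem (Proposition~\ref{characterization of bad sets}): if $\Omega_2(f)$ is large then $f$ concentrates on $O(M^3)$ planes with normals in $\cone^{\mathrm{irr}}_M$, after which a separate multilinear estimate for plane-supported inputs (Proposition~\ref{prop:Omega1-4f}) is required, itself resting on a nontrivial divisor-counting claim for $z^2=qy+\omega$. None of this is replaced by anything in your sketch.

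The second gap is the incidence estimate itself, which you explicitly defer ("the main technical obstacle is executing this incidence estimate with enough precision"). For $\mathcal{Q}_1$ the paper does not apply Szemer\'edi--Trotter to points on the planes $\{B(\cdot,d)=0\}$; it dyadically sorts lines by richness $\#(\ell\cap S)\sim s$, controls the number of $s$-rich lines by Corollary~\ref{cor:ST}, and handles the orthogonality constraint $v_\ell\cdot Av_{\ell'}=0$ between the two sides of the parallelogram via a point--circle incidence theorem on the sphere (Lemma~\ref{crossing-incidence estimate with fixed point}), yielding $\Omega_1(\chi_S)\lesssim(\#S)^{7/3}$; and even this bound only applies to characteristic functions, so an atomic decomposition with a good/bad splitting is needed to pass to general $f$. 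In short: the skeleton (interpolation, Fourier counting, "the cone is the enemy") is right, but the proof of the $L^4$ endpoint --- the entire content of Section~\ref{Section: Strichartz Estimate} --- is missing, and the one quantitative shortcut you propose (global count $\Rightarrow$ level-set inequality) is false.
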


 \begin{remark}\label{remark:main}
     By invoking interpolation with $L^\infty$ and $L^2$ estimates, it suffices to prove Theorem~\ref{sharp T3 L4 hyperbolic Strichartz} for $p=4$, i.e.
      \begin{equation} \label{est:L4}
         \| P_N \me^{\mi t\Box}{\phi}\|_{{L_{t,x}^4([0,1]\times \mathbb T^3)}} \lesssim N^{1/4} \| \phi \|_{{L^2_x(\mathbb T^3)}}.
     \end{equation}
     \end{remark}

 \begin{figure}[htbp]
 \begin{tikzpicture}[xscale=4,yscale=2]
 \draw[->] (0,0) -- (1,0);
 \draw[->] (0,0) -- (0,2);
 \draw (0,3/2) -- (1/4,1/4);
 \draw (1/4,1/4) -- (1/2,0);
 \draw[dashed] (0,1/4) -- (1/4,1/4);
 \draw[dashed] (1/4,0) -- (1/4,1/4);
 \node[below] at (1,0) {$1/p$};
 \node[left] at (0,2) {$\mu(p)$};
 \foreach \x in {1/4,1/2}{\draw (\x,0) -- (\x,0.05); \node[below,font=\small] at (\x,0) {\x};}
 \foreach \y in {1/4,3/2}{\draw (0,\y) -- (0.025,\y); \node[left,font=\small] at (0,\y) {\y};}
 \end{tikzpicture}

 \label{figure: relation between power index and p}
 \end{figure}
      \begin{remark}\label{rem:main-2}
   Due to the Galilean invariance of solutions
to the linear hyperbolic Schr\"{o}dinger equation, estimate  \eqref{est:L4} can be reformulated as 
  \begin{equation}
     \| P_S \me^{\mi t\Box} \phi \|_{L^4_{t,x}([0,1]\times \mathbb T^3)} \lesssim 
     \diam(S)^{1/4} \| \phi \|_{L^2_x(\mathbb T^3)}
 \end{equation}
 for any bounded set $S\subset \mathbb{Z}^3$.  Naively, we ask if this can be replaced by a bound depending only on $\# S$, as in the work of Herr-Kwak~\cite{Herr-Kwak}. We will construct  examples in Section~\ref{subsec:examples} to show there is no efficient bound except for the trivial one $(\#S)^{1/4}$.   

\end{remark}

As application of Theorem~\ref{sharp T3 L4 hyperbolic Strichartz}, we consider the Cauchy problem for  hyperbolic nonlinear Schr\"{o}dinger equations (HNLS).  HNLS arise in many physics contexts, such as plasma waves~\cite{Plasma-1,Plasma-2,Plasma-3, Plasma-4} and gravity water waves~\cite{Craig, DS, Saut,Totz2015cmp}. In particular, the 3d cubic HNLS appear in the study of optical self-focusing of short light pulses in nonlinear media~\cite{Plasma-4}, and it is considered one of the canonical NLS equations in 3d~\cite{ZK}.
We refer the readers to the survey paper by Saut-Wang~\cite{Saut2024hnls} for more details.

 The Cauchy problem of two-dimensional  periodic HNLS with cubic nonlinearity
 \begin{equation}\label{cubic HNLS on T2}
     (\mi\partial_t + \partial_{x_1}^2-\partial_{x_2}^2 ) u = |u|^2u,\quad (t,x) \in \mathbb R \times \mathbb{T}^2,
 \end{equation}
 has been considered by Godet-Tzvetkov~\cite{Tzvetkov} and Wang~\cite{wang2013}. They both established $L^4$ Strichartz estimate with $1/4$-derivative loss, using different methods. Besides, Wang~\cite{wang2013} used the Strichartz estimate to prove that the Cauchy problem of \eqref{cubic HNLS on T2} is locally well-posed in $H^{s}(\mathbb T^2)$ for $s>1/2$ while it's ill-posed for $s<1/2$ {in
 the sense that the solution map is not $C^3$ continuous in $H^s(\mathbb T^2)$ even for small data}.  The recent work \cite{Wang2025-1} established the sharp unconditional well-posedness in Fourier–Lebesgue spaces (modulo the endpoint case) for \eqref{cubic HNLS on T2} and~\cite{Wang2025-2} considered HNLS with all odd power nonlinearities on $\mathbb{R}\times\mathbb{T}$ and proved sharp local well-posedness.
 
Here we study the  three-dimensional periodic HNLS 
 \begin{equation}\label{HNLS on T3}
     \mi\partial_t u + \Box u = \pm |u|^{2k}u,\quad (t,x) \in \mathbb R \times \mathbb{T}^3,
 \end{equation}
 where $k$ is a positive integer. The Cauchy problem  for  \eqref{HNLS on T3}  was posed by Saut-Wang {\cite{Saut2024hnls}.}
 In the Euclidian case, the equation \eqref{HNLS on T3} enjoys the scaling symmetry, which leaves the critical Sobolev norm $\| \cdot\|_{\dot H^{s_c}(\mathbb R^3)}$ invariant for $s_c=\frac 32-\frac 1k$. Although in the periodic case we don't have this natural scaling symmetry, the notation of critical Sobolev index provides us heuristics. 
 We have the following results of local well-posedness.
 \begin{thm}\label{local well-posedness of HNLS on T^3}
     For $k\geq2$, the Cauchy problem of \eqref{HNLS on T3} is locally well-posed in $H^{s_c}(\mathbb T^3)$. 
     For $k=   1$, the Cauchy problem of \eqref{HNLS on T3} is locally well-posed in $H^{s  }(\mathbb T^3)$ for any $s>s_c=1/2$.
 \end{thm}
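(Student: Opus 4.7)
The plan is to run a standard Picard iteration on the Duhamel formula
\begin{equation}
u(t) = \me^{\mi t\Box}u_0 \mp \mi \int_0^t \me^{\mi(t-s)\Box}\bigl(|u|^{2k}u\bigr)(s)\,\mathrm ds
\end{equation}
in a Fourier-restriction-type function space adapted to the symbol $P(\xi)=\xi_1^2-\xi_2^2-\xi_3^2$ of $\Box$, with Theorem~\ref{sharp T3 L4 hyperbolic Strichartz} as the main analytic input. The subcritical and critical regimes call for different spaces: for $k=1$ I would use Bourgain $X^{s,b}$ spaces, whereas for $k\geq 2$ I would adopt the $U^2_\Box/V^2_\Box$ atomic framework of Herr--Tataru--Tzvetkov.

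For $k=1$ and $s>1/2$, pick $b=\tfrac12+\delta$ with $\delta>0$ small. The transfer principle combined with Theorem~\ref{sharp T3 L4 hyperbolic Strichartz} at $p=4$ yields
\begin{equation}
\|P_N u\|_{L^4_{t,x}([0,1]\times\mathbb T^3)} \lesssim N^{1/4}\|P_N u\|_{X^{0,b}}.
\end{equation}
A Littlewood--Paley decomposition reduces the required trilinear estimate $\||u|^2 u\|_{X^{s,-1/2+\delta}}\lesssim \|u\|_{X^{s,b}}^{3}$ to dyadic bounds: by Hölder and the transferred $L^4$ Strichartz, each configuration $(N_1,N_2,N_3)$ contributes a factor $\prod_j N_j^{1/4}$, which after redistribution of derivatives is summable against the prefactor $N_{\max}^{-s}$ precisely when $s>1/2$. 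Contraction on a ball in the restricted space $X^{s,b}_{[0,T]}$ with $T=T(\|u_0\|_{H^s})$ small then gives local well-posedness.

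For $k\geq 2$ the critical regularity $s_c=\tfrac32-\tfrac1k\geq 1$ sits at the endpoint $b=\tfrac12$ where $X^{s,b}$ fails, so I would instead work with
\begin{equation}
\|u\|_{X^{s_c}}^2 = \sum_N N^{2s_c}\|P_N u\|_{U^2_\Box}^2, \qquad \|u\|_{Y^{s_c}}^2 = \sum_N N^{2s_c}\|P_N u\|_{V^2_\Box}^2.
\end{equation}
The atomic structure of $U^2_\Box$ and the embedding $V^2_\Box\hookrightarrow U^p_\Box$ transfer Theorem~\ref{sharp T3 L4 hyperbolic Strichartz} to $\|P_N u\|_{L^p_{t,x}}\lesssim N^{\mu(p)}\|P_N u\|_{Y^0}$ for all $p\geq 4$, with $p\in[2,4]$ handled by interpolation with $L^\infty_t L^2_x$. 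The $U^2_\Box$ duality for the Duhamel term reduces the target bound
\begin{equation}
\Bigl\|\int_0^t \me^{\mi(t-s)\Box}|u|^{2k}u(s)\,\mathrm ds\Bigr\|_{X^{s_c}} \lesssim \|u\|_{X^{s_c}}^{2k+1}
\end{equation}
to a $(2k{+}2)$-linear space-time integral on dyadic pieces $P_{N_0}v\cdot\prod_{j=1}^{2k+1} P_{N_j}u_j^{(\pm)}$; ordering the output frequencies and applying Hölder with a Strichartz exponent tuned to each factor (and placing derivatives on the two highest-frequency blocks by Bernstein) produces a dyadic estimate whose exponents balance exactly at $s=s_c$, so that the sum converges.

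The principal obstacle is this multilinear estimate at critical regularity. The absence of $N^\varepsilon$ loss in Theorem~\ref{sharp T3 L4 hyperbolic Strichartz} is essential: at the scaling-critical exponent, even a polylogarithmic excess such as that coming from~\eqref{est:bourgain} would defeat the dyadic summation, which is precisely why critical well-posedness for \eqref{HNLS on T3} was previously inaccessible. A secondary technical point comes from the branch $\mu(p)=\tfrac12-\tfrac1p$, corresponding to frequency configurations concentrated along the line contained in the hyperbolic paraboloid: these resonant blocks saturate the Strichartz bound and must be handled separately, either by exploiting the $V^2_\Box$ modulation weights to gain time–frequency separation, or by a refined bilinear Strichartz estimate which, in view of Remark~\ref{rem:main-2}, should be accessible from the same incidence-geometric input that proves Theorem~\ref{sharp T3 L4 hyperbolic Strichartz}. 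Once the multilinear bound is in place, standard persistence-of-regularity and continuous-dependence arguments in the $U^2/V^2$ framework complete the proof.
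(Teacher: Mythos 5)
Your overall architecture (Duhamel iteration in adapted spaces, with Theorem~\ref{sharp T3 L4 hyperbolic Strichartz} transferred to $U^4_\Box/V^2_\Box$ as the analytic input) matches the paper, which in fact runs the $U^2/V^2$ framework uniformly for all $k\geq 1$ rather than switching to $X^{s,b}$ for $k=1$. But there is a genuine gap at the heart of your multilinear estimate: estimating each dyadic factor separately in $L^4$ does not close. In the dominant high-high interaction, two factors (the dual function and one input, say at frequencies $N_0\approx N_1=N_{\max}$) each lose $N_{\max}^{1/4}$, and the Sobolev weights $N_0^{s}N_1^{-s}\approx 1$ cancel there, leaving an unweighted factor $N_{\max}^{1/2}$ that is not summable for any $s$. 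The paper's fix is the bilinear Strichartz estimate $\|P_{N_1}u\,P_{N_2}v\|_{L^2_{t,x}}\lesssim \min\{N_1,N_2\}^{1/2}\|u\|_{Y^0}\|v\|_{Y^0}$ (Proposition~\ref{Bilinear Strichartz estimate on T^3}), obtained by tiling the high-frequency annulus into cubes of side length $\min\{N_1,N_2\}$ and invoking the Galilean-invariant form $\diam(S)^{1/4}$ of the $L^4$ bound from Remark~\ref{rem:main-2}; pairing the dual function with the second-highest input and the highest input with the third then yields only positive powers of the \emph{low} frequencies, which the weights absorb at $s=s_c$ ($k\geq2$) and for $s>1/2$ ($k=1$), with the remaining $2k-2$ factors placed in $L^\infty_{t,x}$ by Bernstein. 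You mention a "refined bilinear Strichartz estimate" only as one of two optional devices for resonant blocks; it is in fact the indispensable ingredient for every frequency configuration, and without it neither your $X^{s,b}$ route for $k=1$ nor your critical scheme for $k\geq 2$ closes.

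A second, smaller omission concerns the contraction at critical regularity. For $k\geq 2$ one cannot choose $T$ depending only on $\|u_0\|_{H^{s_c}}$: the multilinear estimate carries no positive power of $T$. The paper splits the data as $P_{\leq N}\phi+P_{>N}\phi$ with $\|P_{>N}\phi\|_{H^{s_c}}\leq\delta$, treats the low-frequency part perturbatively in $T$ (gaining $T N^{k(3-2s)}$ via $L^\infty$ bounds), and closes the high-frequency part using smallness of $\delta$; the iteration set must track both the full norm and the high-frequency norm. Your closing sentence about "standard persistence-of-regularity and continuous-dependence arguments" does not supply this, and the earlier statement $T=T(\|u_0\|_{H^s})$ is only correct in the subcritical case $k=1$.
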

 \begin{thm}\label{ill-posedness of HNLS on T^3}
     For $k=1$ and $T>0$ be arbitrarily small. Assume the data-to-solution map $u_0 \mapsto u(\cdot)$ associated with \eqref{HNLS on T3} on smooth data extends continuously to a map from $H^{1/2}(\mathbb T^3)$ to $C([0,T];H^{1/2}(\mathbb T^3))$. Then this map will not be $C^3$ at the origin.
 \end{thm}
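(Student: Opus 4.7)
The plan is to follow the standard Bourgain--Saut--Tzvetkov scheme for disproving $C^3$ regularity at the origin (\emph{cf.}~\cite{Tzvetkov,wang2013}). Suppose toward contradiction that the solution map extends to a $C^3$ mapping $H^{1/2}(\mathbb{T}^3)\to C([0,T]; H^{1/2}(\mathbb{T}^3))$. Picard iteration on data $\lambda\phi$ with smooth $\phi$ and $\lambda\to 0$ gives the expansion $u^\lambda(t) = \lambda\me^{\mi t\Box}\phi \mp \mi\lambda^3 N_3(\phi)(t) + O(\lambda^5)$ with the trilinear Duhamel iterate
\begin{equation}\label{planN3}
N_3(\phi)(t) = \int_0^t \me^{\mi(t-s)\Box}\bigl[|\me^{\mi s\Box}\phi|^2\, \me^{\mi s\Box}\phi\bigr]\,ds,
\end{equation}
and the $C^3$ hypothesis identifies the third Fr\'echet derivative at $0$ with (a scalar multiple of) $\phi\mapsto N_3(\phi)(t)$, thereby forcing the uniform trilinear bound
\begin{equation}\label{planTri}
\sup_{t\in[0,T]}\|N_3(\phi)(t)\|_{H^{1/2}(\mathbb{T}^3)} \lesssim \|\phi\|_{H^{1/2}(\mathbb{T}^3)}^3.
\end{equation}
It therefore suffices to exhibit a sequence $\{\phi_N\}$ of smooth data for which \eqref{planTri} fails as $N\to\infty$.

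The counterexample exploits the same geometric obstruction that drives the $N^{1/4}$ loss in Theorem~\ref{sharp T3 L4 hyperbolic Strichartz}: the hyperbolic paraboloid contains the one-dimensional vector subspace $\{\xi_1=\xi_2,\,\xi_3=0\}$, on which the symbol $Q(\xi)=-\xi_1^2+\xi_2^2+\xi_3^2$ vanishes identically. For $\phi_N$ Fourier-supported near this null subspace---mirroring the extremal $L^4$ Strichartz configurations produced in Section~\ref{subsec:examples}---the free evolution $\me^{\mi s\Box}\phi_N$ is essentially $s$-independent, so every interaction in \eqref{planN3} is (near-)resonant and the $s$-integration produces a linear factor $t$. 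This gives the approximate identity $N_3(\phi_N)(t) \approx -\mi\, t\,|\phi_N|^2\phi_N$, reducing \eqref{planTri} to a combinatorial comparison between $\||\phi_N|^2\phi_N\|_{H^{1/2}}$ and $\|\phi_N\|_{H^{1/2}}^3$ that is governed entirely by the triple-sum counting function on the Fourier support.

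The main obstacle is that $s_c = 1/2$ is the borderline exponent: a direct Fourier count shows that the simplest null-line datum $\phi_N = \sum_{k=1}^N \me^{\mi k(x_1+x_2)}$ (giving $\|\phi_N\|_{H^{1/2}}\sim N$ and $\||\phi_N|^2\phi_N\|_{H^{1/2}}\sim N^3$) only produces a bounded ratio, so genuine divergence requires extracting an extra (logarithmic) factor. I would produce this by enriching the construction---either by thickening the Fourier support into a two-dimensional slice of the null cone so that the resonant quadruples $\{(\xi_1,\xi_2,\xi_3,\xi_4)\in S_N^4: \xi_1-\xi_2+\xi_3 = \xi_4,\ Q(\xi_1)-Q(\xi_2)+Q(\xi_3)=Q(\xi_4)\}$ grow super-cubically in $\#S_N$, or by a dyadic superposition of null-cone data at logarithmically many scales to accumulate near-resonant cross-scale interactions. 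The sharpness of the $N^{1/4}$ Strichartz loss established in Theorem~\ref{sharp T3 L4 hyperbolic Strichartz} dictates the maximum divergence available, so matching precisely the extremal configurations of Section~\ref{subsec:examples} is essential. Once such a $\phi_N$ is constructed, evaluating at any fixed $t\in(0,T]$ produces $\|N_3(\phi_N)(t)\|_{H^{1/2}}/\|\phi_N\|_{H^{1/2}}^3\to\infty$, contradicting \eqref{planTri} and completing the proof.
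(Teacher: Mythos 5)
Your framework is the same as the paper's: assume the map is $C^3$, deduce the uniform trilinear bound $\sup_{t}\|N_3(\phi)(t)\|_{H^{1/2}}\lesssim\|\phi\|_{H^{1/2}}^3$ on the third Picard iterate, observe that for data Fourier-supported on the null line $\{(k,k,0)\}$ the free flow is the identity so that $N_3(\phi)(t)=\mp\mi t\,|\phi|^2\phi$ exactly, and then defeat the bound by a counting argument. You also correctly diagnose the key difficulty: the unweighted datum $\sum_{k=1}^N\me^{2\pi\mi k(x_1+x_2)}$ gives only a bounded ratio at the borderline exponent $s=1/2$, so a logarithmic gain must be extracted. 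Up to this point the proposal matches the paper.

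The gap is that the counterexample itself --- the entire substance of the proof --- is never constructed. You list two possible directions and carry out neither. The paper's resolution is the classical weighted sum
\begin{equation}
\phi_N(x)=\sum_{k=1}^N\frac{\me^{2\pi\mi(k,k,0)\cdot x}}{k},
\end{equation}
still supported on the null line, for which $\|\phi_N\|_{H^{1/2}}\approx(\log N)^{1/2}$ while the resonant convolution sum at output frequency $(k,k,0)$ satisfies $\sum_{k_1-k_2+k_3=k}(k_1k_2k_3)^{-1}\gtrsim(\log k)^2/k$ (restrict to $1\leq k_1,k_2\leq k/4$, so $k_3\approx k$); squaring and summing against the weight $\langle(k,k,0)\rangle$ gives $\||\phi_N|^2\phi_N\|_{H^{1/2}}\gtrsim(\log N)^{5/2}\approx\log N\,\|\phi_N\|_{H^{1/2}}^3$, which is the needed divergence. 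Your second suggestion (a superposition across logarithmically many scales) is morally in this direction, but you never verify that the resonant sum actually accumulates the extra logarithm, and this verification is the whole point. Your first suggestion --- thickening the support into a two-dimensional slice of the cone --- is problematic: although $\me^{\mi t\Box}$ acts as the identity on any data supported in $\cone$, the cubic product $|\phi|^2\phi$ then leaves the cone, so the Duhamel integrand is genuinely oscillatory in $s$, the identity $N_3(\phi)(t)=\mp\mi t|\phi|^2\phi$ fails, and the non-resonant frequencies would have to be discarded by a separate argument you do not supply. As written, the proposal establishes the reduction but not the theorem.
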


 The outline of this paper is as follows. In Section~\ref{Section: Strichartz Estimate} we prove Theorem~\ref{sharp T3 L4 hyperbolic Strichartz}. 
 We take the Fourier transform and reduce the $L^4$ estimate \eqref{est:L4}  to  a counting argument for parallelograms with vertices in given sets.  We distinguish two cases depending on whether   the sides of the parallelograms lie on a cone.  

 In Section~\ref{Section: Local Well-posedness} we prove Theorem~\ref{local well-posedness of HNLS on T^3}  based on a multilinear estimate and contraction mapping argument. Then we construct specific solutions to prove Theorem~\ref{ill-posedness of HNLS on T^3}.

\section{Strichartz Estimate}\label{Section: Strichartz Estimate}
\subsection{Notations}
 We denote $A\lesssim B$ or $A=O(B)$ if $A\leq CB$ holds for some constant $C>0$ independent with $A$ and $B$. We write $A\approx B$ if $A\lesssim B$ and $B\lesssim A$.
 We denote $\#S$ the cardinality of finite set $S$. For integers $a,b$, we denote $a|b$ if $a^{-1}b\in\mathbb Z$.
 For $f \in L^2(\mathbb T^3)$, the Fourier coefficients of $f$ are given by
\[ \hat f(k) = \int_{\mathbb T^3} f(x) \me^{-2\pi\mi k\cdot x} \dif x, \quad k\in\mathbb{Z}^3,\]
and the Fourier series of $f$ is
\[ f(x) =  \sum_{k\in\mathbb Z^3} \hat f(k) \me^{2\pi\mi k\cdot x}.\]
The series converges in $L^2(\mathbb T^3)$ sense.
For any subset $S\subset \mathbb Z^3$, we denote $P_S$ for the Fourier multiplier with symbol $\chi_S$, i.e.
\[ P_Sf = \sum_{k\in S} \hat f(k) \me^{ 2\pi\mi k\cdot x}. \]
 In this paper, $N$ will always be a dyadic integer, i.e.  $N=2^n$ for some $n\in\mathbb N$.
For $S = {\{ k \in \mathbb Z^3 \mid  N \leq |k| < 2N \}}$ we simply write $P_S$ as $P_N$, and
\[ P_{\leq N} f= \sum_{M\leq N,\ M \text{ dyadic}} P_Mf, \quad P_{>N}f = f-P_{\leq N}f. \]
For $s\in\mathbb R$, the Sobolev space $H^s(\mathbb T^3)$ is the set of all functions $f\in L^2(\mathbb T^3)$ such that the norm
\[ \| f \|_{H^s(\mathbb{T}^3) }: = \left( \sum_{k\in\mathbb{Z}^3} \left<k\right>^{2s} |\hat f(k)|^2 \right)^{1/2} \]
is finite, where $\left< k \right> = \sqrt{1+|k|^2}$.

\subsection{Facts from incidence geometry}We need the Szeme\'{r}edi-Trotter theorem from incidence geometry. An incidence is defined as a point-curve pair so that the point lies on the curve.  The problem is to bound the number of incidence that are possible for certain classes of curves.
 \begin{thm}[\cite{Szemeredi-Trotter,Pach-Sharir2004}Points-lines incidences]\label{Szemeredi-Trotter theorem}
     Let $\mathcal{P}$ be a set of $n$ points and $\mathcal{L}$ be a set of $m$ lines. Then the number of incidences between $\mathcal{P}$ and $\mathcal{L}$ is  $O(n^{2/3}m^{2/3}+m+n)$.
 \end{thm}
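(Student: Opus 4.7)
The plan is to prove the Szemer\'edi--Trotter incidence bound by Sz\'ekely's graph-drawing argument, which reduces the combinatorial question to the crossing number inequality for graphs in the plane. This is the shortest self-contained route and avoids the more delicate cell-decomposition approach of the original paper.

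First I would set up an auxiliary graph $G$ drawn in the plane with vertex set $\mathcal{P}$. On each line $\ell \in \mathcal{L}$, enumerate the points of $\mathcal{P}$ lying on $\ell$ in the natural order along $\ell$ and join consecutive ones by a straight-line edge drawn along $\ell$. If $\ell$ supports $k_\ell \geq 1$ incidences it contributes $k_\ell - 1$ edges, so $|V(G)| = n$ and $|E(G)| \geq I(\mathcal{P},\mathcal{L}) - m$. Two distinct lines meet in at most one point, so any pair of edges coming from different lines crosses at most once, which yields the immediate upper bound $\mathrm{cr}(G) \leq \binom{m}{2}$.

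Next I would invoke the crossing lemma: for any simple graph drawn in the plane with $|E| \geq 4|V|$, the crossing number satisfies $\mathrm{cr}(G) \geq |E|^3/(64|V|^2)$. In the nontrivial regime $I \geq 4n + m$ the graph constructed above meets this hypothesis, so combining the two bounds gives
\begin{equation}
    \frac{(I-m)^3}{64\, n^2} \leq \binom{m}{2},
\end{equation}
which rearranges to $I \lesssim n^{2/3} m^{2/3} + m$. In the complementary regime $I < 4n + m$ the conclusion is immediate, and adding the two cases yields $I = O(n^{2/3} m^{2/3} + m + n)$ as claimed.

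The main obstacle is the crossing lemma itself. Its standard proof starts from the Euler-formula estimate $\mathrm{cr}(G) \geq |E| - 3|V|$, applies this bound to a random induced subgraph in which each vertex is kept independently with probability $p$, takes expectations, and optimizes in $p \sim |V|/|E|$ to upgrade the linear dependence into the desired cubic one. The other mild technical point is to verify that the drawing produced above is a simple graph, but this is automatic: any two edges sharing both endpoints would have to come from two distinct lines through the same pair of points, contradicting the fact that two lines meet in at most one point.
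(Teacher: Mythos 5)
Your proof is correct, but note that the paper does not actually prove this theorem: it is quoted as a black-box from the cited references (Szemer\'edi--Trotter and the Pach--Sharir survey), so there is no in-paper argument to compare against. What you have written is the standard Sz\'ekely crossing-number proof, and all the steps check out: the graph is simple because two points determine at most one line, the edge count $|E|\geq I-m$ is right, the bound $\mathrm{cr}(G)\leq\binom{m}{2}$ follows from the fact that two lines cross at most once, and the case split around $I\geq 4n+m$ correctly handles the hypothesis of the crossing lemma. The one ingredient you defer, the crossing lemma itself, is accurately summarized (Euler bound $\mathrm{cr}(G)\geq|E|-3|V|$ plus random vertex sampling with $p\sim |V|/|E|$), so the argument is complete in the usual sense. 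This is a more elementary and self-contained route than Szemer\'edi and Trotter's original cell-decomposition proof, and it is in fact the proof presented in the Pach--Sharir survey the paper cites; supplying it makes the paper's incidence-geometry toolkit (Theorem 2.2, Corollary 2.3, and via the stereographic projection also Theorem 2.4) fully self-contained.
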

 \begin{coro}[\cite{Pach-Sharir2004}]\label{cor:ST}
     Let $\mathcal{P}$ be a set of $n$ points and $\mathcal{L}$ be a set of lines. Suppose that every line in $\mathcal L$ contains at least $k\geq2$ points of $\mathcal{P}$. Then the number of incidences between $\mathcal{P}$ and $\mathcal{L}$ is  $O(n^{2}/k^2 + n)$.
 \end{coro}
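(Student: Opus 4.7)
My plan is to derive the corollary from the Szemer\'edi--Trotter Theorem~\ref{Szemeredi-Trotter theorem} via a self-consistent substitution. Writing $m = \#\mathcal{L}$ and letting $I$ denote the total number of incidences between $\mathcal{P}$ and $\mathcal{L}$, the hypothesis that each line carries at least $k$ points gives at once $I \ge km$, i.e.\ $m \le I/k$. Theorem~\ref{Szemeredi-Trotter theorem} furnishes an absolute constant $C$ with $I \le C\bigl(n^{2/3} m^{2/3} + m + n\bigr)$, and substituting $m \le I/k$ yields the self-referential bound
\[
I \le C\, n^{2/3} I^{2/3} k^{-2/3} + CI/k + Cn.
\]

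The remainder of the argument is to solve this inequality with a case split on $k$. Fix a threshold $k_0 := 2C$. For $k \ge k_0$ the middle term is at most $I/2$ and can be absorbed to the left, reducing the bound to $I \lesssim n^{2/3} I^{2/3} k^{-2/3} + n$; comparing the two surviving terms yields either $I \lesssim n$ or $I^{1/3} \lesssim n^{2/3}/k^{2/3}$, i.e.\ $I \lesssim n^2/k^2$. So in either subcase $I \lesssim n^2/k^2 + n$. For the bounded range $2 \le k < k_0$ the target bound is comparable to $n^2$, and this follows by noting that each line in $\mathcal{L}$ is determined by any pair of its incident points of $\mathcal{P}$, whence $m \le \binom{n}{2} \lesssim n^2$; plugging into Theorem~\ref{Szemeredi-Trotter theorem} gives $I \lesssim n^{2/3}\cdot n^{4/3} + n^2 + n \lesssim n^2$.

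No step is genuinely difficult, but one subtle point deserves emphasis: a direct substitution of the pair-counting bound $m \lesssim n^2/k^2$ into Szemer\'edi--Trotter yields only $I \lesssim n^2/k^{4/3} + n$, strictly weaker than the claim. The sharper $k^{-2}$ factor arises specifically from the self-referential substitution $m \le I/k$ together with the absorption step above, and this is the essential device of the proof.
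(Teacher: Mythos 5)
Your proof is correct. The paper itself does not prove this corollary---it is quoted from Pach--Sharir---so there is no in-paper argument to compare against; your derivation (substituting $m\le I/k$ into Szemer\'edi--Trotter, absorbing the linear term for $k\gtrsim 1$, and handling bounded $k$ via $m\le\binom{n}{2}$) is the standard one and every step checks out.
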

 We also need the following upper bound on points-circles incidences.
 \begin{thm}[Points-circles incidences on sphere]\label{thm:pointcircleincidence}
     Let $\mathcal{P}$ be a set of $n$ points on the unit sphere and $\mathcal{C}$ be a set of $m$ great circles on the unit sphere. Then the number of incidences between $\mathcal{P}$ and $\mathcal{C}$ is  $O(n^{2/3}m^{2/3}+m+n)$.
 \end{thm}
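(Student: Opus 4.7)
The plan is to reduce point-great-circle incidences on $S^2$ to point-line incidences in $\mathbb R^2$ via the gnomonic (central) projection, and then invoke Theorem~\ref{Szemeredi-Trotter theorem}. The key geometric fact is that central projection from the origin sends any great circle to the intersection of its containing plane with the target tangent plane, which is a straight line.

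Concretely, I would cover $S^2$ by a bounded number (say six) of open hemispheres $H_1,\dots,H_6$ centered at $\pm e_1,\pm e_2,\pm e_3$, so that every point of $S^2$ lies in at least one $H_j$. For each $j$, let $c_j$ denote the center of $H_j$, let $T_j=\{x\in\mathbb R^3: x\cdot c_j=1\}$ be the tangent plane at $c_j$, and define the gnomonic projection $\pi_j\colon H_j\to T_j$ by $\pi_j(p)=p/(p\cdot c_j)$. Then $\pi_j$ is a bijection, and for any plane $\Pi$ through the origin the image $\pi_j((\Pi\cap S^2)\cap H_j)$ lies inside the straight line $\Pi\cap T_j$ (this line is non-degenerate exactly when $\Pi$ is not perpendicular to $c_j$, but in the perpendicular case $C=\Pi\cap S^2$ does not meet $H_j$ at all).

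Next, assign to each $p\in\mathcal P$ a unique hemisphere $H_{j(p)}$ containing it, and set $\mathcal P_j=\{p\in\mathcal P: j(p)=j\}$, so $\sum_j|\mathcal P_j|=n$. For each $j$, let $\mathcal L_j=\{\Pi_C\cap T_j:C\in\mathcal C,\ C\cap H_j\neq\emptyset\}$, where $\Pi_C$ is the plane through the origin containing $C$; thus $|\mathcal L_j|\leq m$. Every incidence $(p,C)\in\mathcal P\times\mathcal C$ with $p\in C$ corresponds to the planar incidence $(\pi_{j(p)}(p),\Pi_C\cap T_{j(p)})$, with the correspondence injective. Applying Theorem~\ref{Szemeredi-Trotter theorem} in each $T_j$ and summing gives
\[
I(\mathcal P,\mathcal C)\ \leq\ \sum_{j=1}^{6}\bigl(|\mathcal P_j|^{2/3}|\mathcal L_j|^{2/3}+|\mathcal P_j|+|\mathcal L_j|\bigr)\ \lesssim\ n^{2/3}m^{2/3}+n+m,
\]
using $|\mathcal P_j|\leq n$, $|\mathcal L_j|\leq m$ and the fact that there are only $O(1)$ hemispheres.

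I do not anticipate a serious obstacle, since the only things to verify are routine: that every point/circle of interest is seen by some hemisphere, that the projection is well-defined on that hemisphere, and that great circles indeed go to lines. The mildly delicate point is making sure no incidence is lost or multiply counted at the boundaries between hemispheres; this is handled cleanly by assigning each point to one fixed hemisphere $H_{j(p)}$, so that each spherical incidence is recorded exactly once in the corresponding planar incidence problem.
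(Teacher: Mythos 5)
Your proposal is correct and follows essentially the same route as the paper: covering $\mathbb{S}^2$ by finitely many open hemispheres, applying the gnomonic/central projection $p\mapsto p/(p\cdot c_j)$ (which is exactly the paper's map $\Psi$), noting that great circles go to lines, and invoking Theorem~\ref{Szemeredi-Trotter theorem} on each chart. Your explicit handling of the degenerate plane and of multiple counting via a fixed assignment $p\mapsto H_{j(p)}$ is a slightly more careful write-up of the same argument.
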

 \begin{proof}
                It suffices to consider the incidences on a half sphere, since $\mathbb{S}^2$ can be covered by eight half spheres. We define the map $\Psi\colon \{(x_1,x_2,x_3)\in \mathbb{S}^2 \mid x_3>0\} \to \mathbb R^2$, $\Psi(x_1,x_2,x_3)=(x_1/x_3,x_2/x_3)$. It's easy to see that $\Psi$ is a bijection, hence it preserves the number of incidences. Besides, $\Psi$ maps the intersection of great circles and half sphere into lines on the plane, the conclusion follows from Theorem~\ref{Szemeredi-Trotter theorem}.   
 \end{proof}
\begin{remark}
    The same points-circles incidences estimate on the sphere holds true if no three circles intersect in two common points; for example, if all circles are congruent and are not great circles on the sphere, see \cite[Section 5.3]{MR1032370} for more information. 
\end{remark}
 
 \subsection{Preparation}
 We will focus on the proof of the $L^4$ Strichartz estimate \eqref{est:L4}.
 We denote $A$ the diagonal matrix $\operatorname{diag}\{1,-1,-1\}$, and $h(\xi) = \xi \cdot A\xi$ denotes the inner product of $\xi\in \mathbb Z^3$ and $A\xi$. With these notations, we may write
 \begin{equation}
     \me^{\mi t\Box}\phi(x) = \sum_{\xi\in\mathbb Z^3}\hat{\phi}(\xi) \me^{2\pi\mi (x\cdot \xi + th(\xi))}.
 \end{equation}
 As a result, its $L^4$ norm is given by
 \begin{align}
    & \int_{[0,1]\times \mathbb T^3} | \me^{\mi t\Box}\phi(x)|^4 \dif t\dif x \\=&{} \sum_{\xi_1,\xi_2,\xi_3,\xi_4\in\mathbb{Z}^3} \overline{\hat\phi(\xi_1)}\hat\phi(\xi_2)  \overline{\hat{\phi}(\xi_3)} \hat\phi(\xi_4) \int_{[0,1]\times \mathbb T^3} \me^{2\pi\mi (x\cdot\sum_{i=1}^4(-1)^i\xi_i + t\sum_{i=1}^4 (-1)^ih(\xi_i) )} \dif t\dif x \\ =& \sum_{(\xi_1,\xi_2,\xi_3,\xi_4)\in\mathcal{Q}} \overline{\hat\phi(\xi_1)}\hat\phi(\xi_2)  \overline{\hat{\phi}(\xi_3)} \hat\phi(\xi_4), \label{formula:L4}
 \end{align}
 where
 \begin{align}
     \mathcal{Q} &= \Big\{ (\xi_1,\xi_2,\xi_3,\xi_4) \in \mathbb{Z}^{3\times4} \Bigm| \sum_{i=1}^4(-1)^i\xi_i=0,\ \sum_{i=1}^4(-1)^ih(\xi_i)=0 \Big\}
     \\& = \Big\{ (\xi_1,\xi_2,\xi_3,\xi_4) \in \mathbb{Z}^{3\times4} \Bigm| \sum_{i=1}^4(-1)^i\xi_i=0,\ {(\xi_1-\xi_2)\cdot A(\xi_1-\xi_4)=0} \Big\}.
 \end{align}
 The first condition indicates that $\xi_1,\xi_2,\xi_3,\xi_4$ form a parallelogram, while the second condition indicates some relations between the directions of the sides. We denote
 \begin{equation}
     \cone = \{ \xi \in \mathbb Z^3 \mid \xi \cdot A\xi =0 \},
 \end{equation}
 which will play a role in our arguments.
 We denote $\mathcal{H}(S)$ the set of all planes (not necessarily passing through the origin) with normal vector belonging to $S\subset \mathbb Z^3$.
 The set $\mathcal{Q}$ can be decomposed as  $ \mathcal{Q}_1 \cup \mathcal{Q}_2$, where
 \begin{equation}\label{notation:Q1}
     \mathcal{Q}_1 = \{ (\xi_1,\xi_2,\xi_3,\xi_4) \in \mathcal{Q} \mid \xi_1 - \xi_2 \notin \cone, \text{ and } \xi_1 - \xi_4 \notin \cone \},
 \end{equation}
 \begin{equation}  \label{notation:Q2}
     \mathcal{Q}_2 = \{ (\xi_1,\xi_2,\xi_3,\xi_4) \in \mathcal Q \mid \xi_1 - \xi_2 \in \cone,\ \text{ or  }\ \xi_1 - \xi_4 \in \cone \}.
 \end{equation}
 We also denote the four-linear operators
 \begin{equation}
     \Omega_1(f_1,f_2,f_3,f_4) = \sum_{{(\xi_1,\xi_2,\xi_3,\xi_4)} \in \mathcal{Q}_1 } f_1(\xi_1)f_2(\xi_2)f_3(\xi_3)f_4(\xi_4),
 \end{equation}
 \begin{equation}
     \Omega_2 {(f_1,f_2,f_3,f_4)} = \sum_{{(\xi_1,\xi_2,\xi_3,\xi_4)} \in \mathcal{Q}_2} f_1(\xi_1)f_2(\xi_2)f_3(\xi_3)f_4(\xi_4).
 \end{equation}
 For simplicity, we write $\Omega_1(f)$ and $\Omega_2(f)$ instead of $\Omega_1(f,f,f,f)$ and $\Omega_2(f,f,f,f)$.

We introduce more notations. For any    $M>0$,
\begin{equation}
    \cone_M = \{  \xi \in \cone  \setminus\{0\} \mid  |\xi| /\gcd(\xi) \leq M   \},
\end{equation}
\begin{equation} 
    \cone^{\mathrm{irr}}_M = \{ \xi\in\cone_M \mid \gcd(\xi)=1 \},
\end{equation}
where $\gcd(\xi)$ denotes the greatest common divisor of coordinates of $\xi\in\mathbb Z^3$.
\begin{lem}\label{lem:sizeCone-irr}
    We have the size estimate $\#\cone^{\mathrm{irr}}_M \lesssim M$.
\end{lem}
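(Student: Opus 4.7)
The plan is to reduce the counting to the classical parametrization of primitive Pythagorean triples. For $\xi \in \cone^{\mathrm{irr}}_M$ one has $\gcd(\xi) = 1$, so the defining condition $|\xi|/\gcd(\xi) \leq M$ simplifies to $|\xi| \leq M$. Writing $\xi = (a,b,c)$, the defining relation $a^2 = b^2 + c^2$ of $\cone$ gives $|\xi|^2 = a^2 + b^2 + c^2 = 2a^2$, so the constraint $|\xi| \leq M$ is equivalent to $|a| \leq M/\sqrt{2}$.

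Next I would observe that primitivity propagates to pairs: if a prime $p$ divides two of $a, b, c$, then the identity $a^2 = b^2 + c^2$ forces $p^2$ to divide the remaining square, hence $p$ divides all three, contradicting $\gcd(a,b,c)=1$. In particular, apart from the $O(1)$ degenerate contribution coming from $b=0$ or $c=0$ (which restricts to $(\pm 1,\pm 1,0)$ and $(\pm 1,0,\pm 1)$), the triple of absolute values $(|a|,|b|,|c|)$ is a primitive Pythagorean triple with hypotenuse $|a|$.

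I would then invoke the standard parametrization: up to swapping $b$ and $c$, every such primitive triple arises uniquely from a pair $(m,n) \in \mathbb{Z}^2$ with $m>n>0$, $\gcd(m,n)=1$, and $m+n$ odd, via $|a| = m^2 + n^2$. Therefore the number of primitive Pythagorean triples with hypotenuse at most $M/\sqrt{2}$ is bounded by
\[
\#\bigl\{ (m,n) \in \mathbb{Z}^2 : m^2 + n^2 \leq M/\sqrt{2} \bigr\} = O(M),
\]
by the standard lattice-point count in a disk. Multiplying by the $O(1)$ factor for the signs of $(a,b,c)$ and the swap of $b,c$ yields $\#\cone^{\mathrm{irr}}_M = O(M)$.

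I do not anticipate any serious obstacle: this is a routine Diophantine count. The only minor care required is in dispatching the degenerate cases where one of $b, c$ vanishes and in noting that the implicit constant in the disk lattice-point estimate is absolute, independent of $M$.
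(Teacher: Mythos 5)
Your proof is correct and follows essentially the same route as the paper: both reduce the count to the parametrization of primitive Pythagorean triples by coprime pairs $(m,n)$ with $m^2+n^2$ comparable to the hypotenuse, and then count lattice points in a disk of radius $O(\sqrt{M})$ to get the bound $O(M)$. The only difference is cosmetic — the paper derives the parametrization from scratch via the prime factorization of the odd leg, whereas you invoke the classical statement directly.
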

\begin{proof}
   
    Suppose $(x_1,x_2,x_3) \in \cone_M^{\text{irr}}$, i.e. $x_1^2=x_2^2+x_3^2$ and $\gcd(x_1,x_2,x_3)=1$. It's clear that $x_2,x_3$ cannot be both even, we may assume $x_3$ is odd,  and $x_3 = \pm  p_1^{\alpha_1}\dots p_r^{\alpha_r}$ is the prime factorization. We note that
\begin{equation}
    p_i^{2\alpha_i} | x_3^2 = (x_1-x_2)(x_1+x_2),
\end{equation}
so there exists some $\gamma_i\in\mathbb N$ such that $p^{\gamma_i}_i | (x_1-x_2)$ and $p_i^{2\alpha_i-\gamma_i} | (x_1+x_2)$.
If $\gamma_i \neq 0,2\alpha_i$, then $p_i$ divides both $x_1-x_2$ and $x_1+x_2$ and hence $p_i$ divides both $2x_1$ and $2x_2$. Consequently $p_i|\gcd(x_1,x_2,x_3)$, which is a contradiction. Hence we have exactly one of $p_i^{2\alpha_i}|(x_1-x_2)$ and $p_i^{2\alpha_i}|(x_1+x_2)$ holds.  Denote $I = \{ 1\leq i\leq r\mid p_i^{2\alpha_i} \text{ divides } x_1-x_2 \}$ and 
\begin{equation}
    m = \prod_{i\in I} p_i^{\alpha_i},\quad n = \prod_{i\notin I} p_i^{\alpha_i} = |x_3|/m,
\end{equation}
where the product is defined to be $1$ if the index set is empty.
Then $\gcd(m,n)=1$ and we have $(x_1-x_2 ,x_1+x_2 )= \pm (m^2,n^2)$, or equivalently
\begin{equation}
    (x_1, x_2) = \pm \left( \frac{n^2+m^2}{2}, \frac{n^2-m^2}2 \right).
\end{equation}
Therefore, each point $(x_1,x_2,x_3) \in \cone_M^{\text{irr}}$ can be represented by a pair $(m,n) \in \mathbb Z^2$ satisfying $m^2+n^2 \lesssim M$, and hence $\#\cone_M^{\text{irr}} \lesssim M$.
\end{proof}

 Before the start of proofs, we briefly talk  about the geometry of parallelograms in $\mathcal{Q}_2$.
 Due to symmetry, we may only consider the case $\xi_1-\xi_2 \in \cone$. For each ${(\xi_1,\xi_2,\xi_3,\xi_4)} \in \mathcal{Q}_2$ such that the parallelogram is non-degenerate, the four vertices are contained in some plane $H$. From the definitions we can see that $A(\xi_1-\xi_2)$ is perpendicular to both $\xi_1-\xi_2$ and $\xi_1-\xi_4$. Hence $A(\xi_1-\xi_2)$ is a normal vector of $H$, and it belongs to $\cone$. When the the parallelogram is degenerate, we can still find a plane $H$ containing all vertices and its normal vector belongs to $\cone$.

 On the other hand, let $H$ be a plane with normal vector $n$ which belongs to $\cone$, and suppose $H$ contains the four vertices of ${(\xi_1,\xi_2,\xi_3,\xi_4)} \in \mathcal{Q}$. Clearly, $n$ is perpendicular to  $\xi_1-\xi_2$, $\xi_1-\xi_4$ and also $An$. Notice that
 \begin{equation}
     0 =  (\xi_1-\xi_2) \cdot n = A(\xi_1-\xi_2) \cdot An,
 \end{equation}
 and ${(\xi_1,\xi_2,\xi_3,\xi_4)} \in \mathcal{Q}$ indicates that $A(\xi_1-\xi_2) \cdot (\xi_1-\xi_4) = 0 $. Hence we know that
 \begin{equation}
    \operatorname{span}_{\mathbb R}\{ n,A(\xi_1-\xi_2) \} \perp \operatorname{span}_{\mathbb R} \{ \xi_1-\xi_4,An \}.
 \end{equation}
But the sum of their dimensions is no more than $3$.
Therefore, we have that either $\xi_1-\xi_4$ is a multiple of $An$ or $A(\xi_1-\xi_2)$ is a multiple of $n$, and in both cases ${(\xi_1,\xi_2,\xi_3,\xi_4)}$ must belong to $\mathcal{Q}_2$.
 As a result, we may write 
 \begin{equation}\label{decomposition of Q_2 as union of H^4 cap Q}
 \mathcal{Q}_2 =
     \bigcup_{H \in \mathcal{H}(\cone)} {\{ {(\xi_1,\xi_2,\xi_3,\xi_4)} \in \mathbb{Z}^{3\times 4} \mid \xi_i\in H,\ 1\leq i\leq 4 \} } \cap \mathcal{Q}.
 \end{equation}

\subsection{The contributions of parallelograms with side on the cone}
 \begin{prop}\label{number of parallelograms with side on cone}
 For $f\colon \mathbb Z^3 \to \mathbb R_+$ supported on a finite subset $S\subset \mathbb Z^3$, we have
\begin{equation}
     \Omega_2(f)\lesssim \diam(S) \| f \|_{\ell^2(\mathbb Z^3)}^4.
\end{equation}
\end{prop}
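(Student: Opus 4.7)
The plan is to exploit the geometric picture derived in the preparation: every quadruple in $\mathcal Q_2$ has a pair of opposite sides on $\cone$, and the vertices lie in a plane $H$ whose normal lies in $\cone$. By the symmetry between the two defining conditions in \eqref{notation:Q2}, up to a factor of two it suffices to estimate
$$\Omega_2^{(a)}(f) := \sum_{w\in\cone} a(w)^2, \qquad a(w) := \sum_{\xi} f(\xi)\, f(\xi-w),$$
obtained by parametrizing $w = \xi_1 - \xi_2 \in \cone$ (which forces $\xi_3 - \xi_4 = -w$). The zero mode contributes $a(0)^2 = \|f\|_{\ell^2}^4$, which is acceptable since we may assume $\diam(S)\geq 1$.

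For $w\neq 0$, I would write $w = kv$ with $v$ a primitive cone vector and $k\in\mathbb Z\setminus\{0\}$. Since $f$ is supported in $S$, $a(w)\neq 0$ forces $|w|\leq D := \diam(S)$, so $v\in\cone^{\mathrm{irr}}_D$, a set of cardinality $\lesssim D$ by Lemma~\ref{lem:sizeCone-irr}. For each primitive $v$, I would partition $S$ into lines parallel to $v$; each such line meets $S$ in at most $O(D/|v|)$ lattice points. Cauchy--Schwarz along each line then gives the uniform bound $a(kv)\leq \|f\|_{\ell^2}^2$, together with
$$\sum_k a(kv) \;=\; \sum_{\ell\parallel v}\Big(\sum_{\xi\in\ell\cap S} f(\xi)\Big)^2 \;\lesssim\; (D/|v|)\,\|f\|_{\ell^2}^2,$$
and multiplying these yields the per-direction bound $\sum_k a(kv)^2 \lesssim (D/|v|)\,\|f\|_{\ell^2}^4$.

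The main obstacle is summing over primitive cone directions without a logarithmic loss, since the naive estimate gives $\sum_{v\in\cone^{\mathrm{irr}}_D} D/|v| \sim D\log D$. To eliminate this $\log$ and obtain the clean $\diam(S)$ factor, I would refine the per-$v$ analysis via the plane decomposition \eqref{decomposition of Q_2 as union of H^4 cap Q}: for each primitive $v$, the planes $H$ with normal $Av$ partition $\mathbb Z^3$ and induce a splitting $f = \sum_H f\cdot 1_H$. Running the line argument inside each plane gives the per-plane bound $\lesssim (D/|v|)\|f\cdot 1_H\|_{\ell^2}^4$, and combining with $\sum_{H:v_H=v}\|f\cdot 1_H\|_{\ell^2}^4 \leq \|f\|_{\ell^2}^2\cdot\max_H\|f\cdot 1_H\|_{\ell^2}^2$ one should balance the plane-mass quantity against Lemma~\ref{lem:sizeCone-irr} through a dyadic decomposition in $|v|$. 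This balancing is where the bulk of the technical work will be concentrated.
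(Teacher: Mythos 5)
Your reduction to $\sum_{w\in\cone}a(w)^2$, the uniform bound $a(kv)\le\|f\|_{\ell^2}^2$, the per\nobreakdash-direction bound $\sum_k a(kv)^2\lesssim(\diam(S)/|v|)\,\|f\|_{\ell^2}^4$, and the identification of the resulting $\log\diam(S)$ loss are all correct. But the step you defer as ``the bulk of the technical work'' is the entire content of the proposition, and the tools you list for it do not suffice. At a fixed dyadic scale $|v|\approx M$ there are $\approx M$ primitive cone directions (Lemma~\ref{lem:sizeCone-irr} is sharp), and the only a priori bound on each plane mass is $\max_{H\perp Av}\|f\chi_H\|_{\ell^2}^2\le\|f\|_{\ell^2}^2$; inserting this into your chain gives $\frac{\diam(S)}{M}\cdot M\cdot\|f\|_{\ell^2}^4$ per scale and hence $\diam(S)\log\diam(S)\,\|f\|_{\ell^2}^4$ overall. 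To do better you must rule out the scenario in which, at every scale, $\approx M$ distinct cone directions each see a plane carrying a constant fraction of the mass of $f$ --- and excluding that is a genuine incidence-geometric statement (many heavy planes with distinct cone normals force the mass onto their common intersection, and a line supports at most two cone-normal directions), not something that follows from $\sum_H\|f\chi_H\|_{\ell^2}^4\le\|f\|_{\ell^2}^2\max_H\|f\chi_H\|_{\ell^2}^2$ plus a dyadic sum. So the proposal, as written, has a gap precisely where the difficulty lies.

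The missing idea --- and the paper's actual proof --- is to organize the count by the transversal pair $(\xi_1,\xi_4)$ rather than by the cone side $w=\xi_1-\xi_2$. For a quadruple in $\mathcal Q_2$ with $\xi_1-\xi_2\in\cone$, the vector $A(\xi_1-\xi_2)$ lies in $\cone$ and is orthogonal to $\xi_1-\xi_4$; since $\{n\in\cone\mid n\perp(\xi_1-\xi_4)\}$ is a plane section of a quadric cone through its apex, it is a union of at most two lines. Hence once $(\xi_1,\xi_4)$ is fixed there are at most two admissible directions for $\xi_1-\xi_2$, and by \eqref{number of points on intersection of line and S} at most $O(\diam(S))$ choices of $(\xi_2,\xi_3)$; by symmetry the same holds with the roles of the pairs exchanged, and a Schur-test/Cauchy--Schwarz argument then gives $\Omega_2(f)\lesssim\diam(S)\|f\|_{\ell^2}^4$ with no sum over cone directions at all. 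This ``at most two directions per pair'' is exactly the orthogonality your direction-first decomposition discards and cannot recover afterwards by Cauchy--Schwarz. If you wish to salvage your setup, expand $a_H(kv)^2$ over pairs $(\xi_1,\xi_4)\in H\times H$ and note that each pair occurs for at most two $(v,H)$ --- but that is precisely the paper's argument in different clothing.
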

\begin{proof}

    It suffices to consider the case $\xi_1-\xi_2 \in \cone$ and $\xi_1\neq \xi_4$. 
For given $(\xi_1,\xi_4)$,
   
    from previous discussion we know there exists some plane $H$ contains both $\xi_1,\xi_4$ and its normal vector belongs to $\cone$. It's not hard to check that there exist at most two such planes. For each such plane $H$, $A(\xi_1-\xi_2)$ is a multiple of its normal vector and hence $\xi_2$ lies on a line $\ell$ passing through $\xi_1$ with direction determined by $H$.
    
    We may write  $\xi_2 \in \ell \cap S \subset \mathbb Z^3$ as $\xi_1+r\xi$ with $\xi \in \mathbb Z^3\setminus\{0\}$, thus $r\xi \in \mathbb Z^3$ and $r$ belongs to an interval of length $|\xi|^{-1} \diam(S)$.  From B\'{e}zout's identity, we know  $\gcd(\xi)$ can be written as linear combination of coordinates of $\xi$ with integer coefficients, we have $r \gcd(\xi) \in \mathbb Z$. Thus
    \begin{equation}\label{number of points on intersection of line and S}
        \#(\ell \cap S) = \# \Bigl\{ r\in \frac{1}{\gcd(\xi)} \mathbb{Z} \Bigm| \xi_1+r\xi \in \ell\cap S \Bigr\} \leq \frac{\gcd(\xi)}{|\xi|} \diam(S),
    \end{equation}
    which implies for each pair $(\xi_1,\xi_4)$, there exists at most $O(\diam(S))$ many choices of $(\xi_2,\xi_3)$ such that ${(\xi_1,\xi_2,\xi_3,\xi_4)} \in \mathcal{Q}_2$. We denote all the possible choices as ${(\xi_2,\xi_3)\in \mathcal{R}(\xi_1,\xi_4)}$. 
    
    On the other hand, for given $(\xi_2,\xi_3)$, we can also apply the same argument to $(\xi_1,\xi_4)$, 
    and hence for each pair $(\xi_2,\xi_3)$, there exists at most $O(\diam(S))$ many choices of $(\xi_1,\xi_4)$ such that ${(\xi_1,\xi_2,\xi_3,\xi_4)} \in \mathcal{Q}_2$. As a result,
    \begin{align}
        \Omega_2(f) &= \sum_{\xi_1,\xi_4} \Big( f(\xi_1)f(\xi_4) \sum_{(\xi_2,\xi_3)\in \mathcal{R}(\xi_1,\xi_4)} f(\xi_2)f(\xi_3)  \Big) \\&
        \leq \Big( \sum_{\xi_1,\xi_4} |f(\xi_1)f(\xi_4)|^2 \Big)^{1/2} \Big( \sum_{\xi_1,\xi_4} \Big(  \sum_{(\xi_2,\xi_3)\in \mathcal{R}(\xi_1,\xi_4)} f(\xi_2)f(\xi_3)  \Big)^2 \Big)^{1/2} \\&
        \lesssim \diam(S)^{1/2} \|f\|^2_{\ell^2(\mathbb Z^3)} \Big( \sum_{\xi_1,\xi_4}   \sum_{(\xi_2,\xi_3)\in \mathcal{R}(\xi_1,\xi_4)} | f(\xi_2)f(\xi_3)  |^2 \Big)^{1/2} \\&
        =\diam(S)^{1/2} \|f\|^2_{\ell^2(\mathbb Z^3)} \Big( \sum_{\xi_2,\xi_3}   \sum_{(\xi_1,\xi_4)\in\mathcal{R}(\xi_2,\xi_3)} | f(\xi_2)f(\xi_3)  |^2 \Big)^{1/2} \\&
        \lesssim \diam(S) \|f\|_{\ell^2(\mathbb Z^3)}^4.
    \end{align}
    
\end{proof}

\begin{prop}\label{characterization of bad sets}
   
    For $f\colon \mathbb Z^3 \to \mathbb R_+$ supported on a finite subset $S\subset \mathbb Z^3$ and $M>0$, 
     there exists at most $O(M^3)$  planes $\{H_i\} \subset \mathcal{H}(\cone^{\mathrm{irr}}_M)$, such that 
    \begin{equation}
        \| f\chi_H \|_{\ell^2(\mathbb Z^3)}^2 \geq M^{-2} \|f\|_{\ell^2(\mathbb Z^3)}^2.
    \end{equation}
   If we denote    $f^{\mathrm{error}}:=f\chi_{S \setminus \cup_i H_i}$, then we have 
    \begin{equation}\label{estimate for f^error}
        \Omega_2(f^{\mathrm{error}}) \lesssim M^{-1}\diam(S) \| f\|_{\ell^2(\mathbb Z^3)}^4.
    \end{equation}
\end{prop}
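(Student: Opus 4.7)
The plan is to split $\mathcal{Q}_2$ at scale $M$ according to the size of the primitive direction $\eta \in \cone^{\mathrm{irr}}$ of the side lying on $\cone$ (say $\xi_1-\xi_2$; the symmetric case $\xi_1-\xi_4 \in \cone$ is treated identically). For the counting of bad planes, I note that planes in $\mathcal{H}(\cone^{\mathrm{irr}}_M)$ sharing a common primitive normal are parallel and hence mutually disjoint as subsets of $\mathbb{Z}^3$, so at most $M^2$ of them can each carry $\ell^2$-mass $\geq M^{-2}\|f\|_{\ell^2}^2$. Combined with the $O(M)$ primitive normal directions provided by Lemma~\ref{lem:sizeCone-irr}, this gives at most $O(M^3)$ bad planes.

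When $|\eta| > M$, I sharpen the counting from the proof of Proposition~\ref{number of parallelograms with side on cone}: its estimate \eqref{number of points on intersection of line and S} in fact gives $\#\mathcal{R}(\xi_1,\xi_4) \lesssim \diam(S)/|\eta|$, which in this range is $\lesssim \diam(S)/M$, with the symmetric bound for $(\xi_2,\xi_3)$-pairs. Running the same double Cauchy--Schwarz argument used in Proposition~\ref{number of parallelograms with side on cone} then contributes $M^{-1}\diam(S)\|f\|_{\ell^2}^4$ to $\Omega_2(f^{\mathrm{error}})$, which already matches the desired bound.

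When $|\eta| \leq M$, the discussion preceding \eqref{decomposition of Q_2 as union of H^4 cap Q} places the tuple in a plane $H$ whose primitive normal is $\pm A\eta$, and since $|A\eta| = |\eta| \leq M$ and $A$ preserves $\cone$, one has $H \in \mathcal{H}(\cone^{\mathrm{irr}}_M)$. Upper-bounding by summing over all such $H$ (possibly overcounting degenerate parallelograms) gives
\begin{equation}
\Omega_2(f^{\mathrm{error}})\big|_{|\eta|\leq M} \leq \sum_{H \in \mathcal{H}(\cone^{\mathrm{irr}}_M)} \Omega_2(f^{\mathrm{error}}\chi_H),
\end{equation}
where the contributions from bad $H$ vanish because $f^{\mathrm{error}}$ is supported off them. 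For each good $H$, Proposition~\ref{number of parallelograms with side on cone} together with the goodness bound $\|f\chi_H\|_{\ell^2}^2 < M^{-2}\|f\|_{\ell^2}^2$ yields $\Omega_2(f\chi_H) \leq \diam(S)\cdot M^{-2}\|f\|_{\ell^2}^2 \cdot \|f\chi_H\|_{\ell^2}^2$. Since each lattice point lies in exactly one plane per primitive normal direction, Lemma~\ref{lem:sizeCone-irr} gives $\sum_H \|f\chi_H\|_{\ell^2}^2 \lesssim M\|f\|_{\ell^2}^2$, so the total second-regime contribution is again $M^{-1}\diam(S)\|f\|_{\ell^2}^4$, closing the argument.

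The only piece of bookkeeping that I expect to require genuine care is the identification of the primitive normal of $H$ as $\pm A\eta$ rather than $\pm \eta$: this is what ensures the size restriction $|\eta| \leq M$ actually forces $H \in \mathcal{H}(\cone^{\mathrm{irr}}_M)$ and lets the dichotomy close. Beyond that, both regimes reduce to combining the refined line-count from \eqref{number of points on intersection of line and S} with the cone-irreducible counting of Lemma~\ref{lem:sizeCone-irr}, applied at complementary scales.
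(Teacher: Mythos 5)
Your argument is correct and follows essentially the same route as the paper: the same parallel-planes-times-$O(M)$-normals count for the bad planes, the refined line count $\lesssim \diam(S)/|\eta|$ for the large-direction regime, and the per-plane application of Proposition~\ref{number of parallelograms with side on cone} combined with the goodness bound and orthogonality over parallel planes for the small-direction regime. The only cosmetic difference is that you index the dichotomy by the primitive side direction $\eta$ while the paper indexes by the plane normal $A\eta$; since $A$ is a signed diagonal matrix preserving $\cone$, primitivity, and norms, these are equivalent, exactly as you note.
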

\begin{proof}
  
   We set $\{H_i\}$ to be the set of all planes $H$ with normal vector in $\cone_M^{\mathrm{irr}}$ and satisfying $\| f\chi_H \|_{\ell^2(\mathbb Z^3)}^2 \geq M^{-2} \|f\|_{\ell^2(\mathbb Z^3)}^2$.
   For each $n\in \cone_M^{\mathrm{irr}}$, the planes with normal vector $n$  are parallel with each other, which implies
   \begin{equation}
       \# \{ H \in \mathcal{H}(\{n\}) \mid \| f\chi_H \|_{\ell^2(\mathbb Z^3)}^2 \geq M^{-2} \|f\|_{\ell^2(\mathbb Z^3)}^2 \} \leq M^2,
   \end{equation}
   thus $\#\{H_i\} \leq M^2 \# \cone_M^{\mathrm{irr}} \lesssim M^3$.

 It remains to verify \eqref{estimate for f^error}.
   By the decomposition \eqref{decomposition of Q_2 as union of H^4 cap Q}   and the facts that $0\leq f^{\mathrm{error}} \leq f$, $f^{\mathrm{error}} \chi_{H_i}=0$ and $\mathcal{H}(\cone_M^{\mathrm{irr}})= \mathcal{H}(\cone_M)$, we get
   \begin{align}
       \Omega_2(f^{\mathrm{error}}) &\leq \sum_{ H \in \mathcal{H}(\cone) } \Omega_2(f^{\mathrm{error}} \chi_H) \\& \leq \sum_{ H \in \mathcal{H}(\cone \setminus \cone_M) } \Omega_2(f  \chi_H) + \sum_{ H \in \mathcal{H}(\cone_M^{\mathrm{irr}} ) \setminus \{H_i\} } \Omega_2(f \chi_H).
   \end{align} 
    Recall the estimate \eqref{number of points on intersection of line and S}, by using arguments similar to that in proof of Proposition~\ref{number of parallelograms with side on cone}, we have
    \begin{equation}
       \sum_{ H \in \mathcal{H}(\cone \setminus \cone_M) } \Omega_2(f  \chi_H)
       \lesssim M^{-1}\diam(S) \| f\|_{\ell^2(\mathbb Z^3)}^4.
    \end{equation}
On the other hand, for each $H \in \mathcal{H}(\cone_M^{\mathrm{irr}}) \setminus \{H_i\},$
we have $\| f\chi_H \|_{\ell^2(\mathbb Z^3)}^2 \leq M^{-2} \| f \|_{\ell^2(\mathbb Z^3)}^2$, and
\begin{align}
   \sum_{ H \in \mathcal{H}(\cone_M^{\mathrm{irr}} ) \setminus \{H_i\} } \Omega_2(f \chi_H) &
   ={} \sum_{n\in \cone_M^{\mathrm{irr}}} \sum_{\substack{H \notin \{H_i\} \\ H \perp n}} \Omega_2(f\chi_H)
    \\&
    \lesssim{} \sum_{n\in \cone_M^{\mathrm{irr}}}  \sum_{\substack{H \notin \{H_i\} \\ H \perp n}}\diam(S) \| f\chi_H \|_{\ell^2(\mathbb Z^3)}^4 \\&
    \leq{} \frac{\diam(S)  \|f \|_{\ell^2(\mathbb Z^3)}^2 }{M^2}\sum_{n\in \cone_M^{\mathrm{irr}}}  \sum_{\substack{H \notin \{H_i\} \\ H \perp n}} \|f\chi_H\|_{\ell^2(\mathbb Z^3)}^2 \\&
    \leq{} M^{-2}\#\cone_M^{\mathrm{irr}}  \diam(S) \|f\|_{\ell^2(\mathbb Z^3)}^4 \\& \lesssim M^{-1} \diam(S) \| f ||_{\ell^2(\mathbb Z^3)}^4 .
\end{align}
Here we used Proposition~\ref{number of parallelograms with side on cone} for the first inequality and Lemma~\ref{lem:sizeCone-irr} for the last inequality.
    Hence
    \begin{equation}
        \Omega_2(f^{\mathrm{error}}) \lesssim M^{-1}\diam(S) \| f\|_{\ell^2(\mathbb Z^3)}^4.
    \end{equation}
\end{proof}
 Combining the above two propositions, we see that if $\Omega_2(f)$ is large, then $f$ should concentrate on few planes. Thus we get more information about the geometric structure of the distribution of $f$. This observation  is crucial in our proof.
\subsection{The contributions of parallelograms without side on the cone}

\begin{prop}\label{number of parallelograms without side on cone}
    For $f=\chi_S$ with $S$  a finite subset of $\mathbb{Z}^3$, we have
    \begin{equation}
        \Omega_1(f) \lesssim (\#S)^{7/3}=(\#S)^{1/3} \| f\|^4_{\ell^2(\mathbb Z^3)}.\label{bound:Omega1}
    \end{equation}
\end{prop}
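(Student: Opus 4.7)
My plan is to reduce $\Omega_1(\chi_S)=\#\mathcal{Q}_1$ to a Szemer\'edi--Trotter-type incidence count. I would parametrize each quadruple $(\xi_1,\xi_2,\xi_3,\xi_4)\in\mathcal{Q}_1$ by $(\xi_1,\xi_4,u)$ with $u=\xi_1-\xi_2$ and $\xi_3=\xi_4-u$. The constraints translate to $\xi_1,\xi_4\in T_u:=S\cap(S+u)$, $u,\xi_1-\xi_4\notin\cone$, and $(\xi_1-\xi_4)\cdot Au=0$. The perpendicularity condition places $\xi_1,\xi_4$ on a common translate of the 2-plane $H_u:=\{x:x\cdot Au=0\}$; writing $\lambda=\xi_1\cdot Au=\xi_4\cdot Au$, this yields
\begin{equation}
\Omega_1(\chi_S)\le\sum_{u\notin\cone}\sum_{\lambda} k_u(\lambda)^2,\qquad k_u(\lambda):=\bigl|T_u\cap\{x:x\cdot Au=\lambda\}\bigr|.
\end{equation}

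Noting that $\sum_{u,\lambda}k_u(\lambda)=\sum_u|T_u|=(\#S)^2$, a pointwise bound of the form $k_u(\lambda)\lesssim(\#S)^{1/3}$ would conclude immediately. In general this fails (e.g., when $S$ concentrates on a 2-plane), so I would dyadically decompose the pairs $(u,\lambda)$ according to $k_u(\lambda)\asymp K$ and apply the Szemer\'edi--Trotter theorem (Theorem~\ref{Szemeredi-Trotter theorem}), or its spherical analogue (Theorem~\ref{thm:pointcircleincidence}), to control the number of rich pairs at each dyadic scale. A rich $(u,\lambda)$ singles out an affine 2-plane $\Pi_{u,\lambda}=\{x:x\cdot Au=\lambda\}$ containing $K$ points of $S$ whose $u$-translates also lie in $S$. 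Because $A^2=\mathrm{Id}$, the assumption $u\notin\cone$ forces $Au\notin\cone$, so the plane normal avoids the degenerate cone regime and the induced in-plane form $h|_{\Pi_{u,\lambda}}$ is non-degenerate. Projecting the set of directions $[Au]$ to $\mathbb{S}^2$ would convert the 3D point-plane problem into a planar (or spherical) point-line incidence problem, where the $2/3$ exponent of Szemer\'edi--Trotter supplies the decisive gain needed to pass from the trivial bound $(\#S)^3$ to the claimed $(\#S)^{7/3}$.

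The hard part will be the combinatorial book-keeping: we must carefully balance three varying scales---the size of $T_u$, the fiber size $K$, and the additional constraint that the $u$-translation preserves $S$---and extract the exponent $7/3$ without logarithmic loss. The non-cone hypothesis is essential both for ensuring non-degeneracy of the perpendicularity geometry and for cleanly separating $\mathcal{Q}_1$ from the $\mathcal{Q}_2$ regime handled in Proposition~\ref{number of parallelograms with side on cone}. I expect a dichotomy to be required, distinguishing \emph{spread} subsets of $S$ (where the desired pointwise bound on $k_u(\lambda)$ holds on average) from \emph{concentrated} ones (where the concentration necessarily lies on an off-cone plane and can be dealt with by a 2D Strichartz-type argument in that plane), in a spirit analogous to Proposition~\ref{characterization of bad sets}.
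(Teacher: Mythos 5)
Your opening reduction is sound: parametrizing $\mathcal{Q}_1$ by $(\xi_1,\xi_4,u)$ with $u=\xi_1-\xi_2$, $\xi_3=\xi_4-u$ correctly gives $\Omega_1(\chi_S)\le\sum_{u\notin\cone}\sum_\lambda k_u(\lambda)^2$, and your observations that $\sum_{u,\lambda}k_u(\lambda)=(\#S)^2$ and that $u\notin\cone$ makes $h|_{\Pi_{u,\lambda}}$ non-degenerate are both correct. But this is only a restatement of the counting problem, and the step you defer to ``book-keeping'' is where the entire proof lives. The specific tool you invoke does not apply as stated: the rich objects $(u,\lambda)$ are \emph{planes} in $\mathbb{R}^3$, weighted by a point set $T_u$ that changes with $u$, and Szemer\'edi--Trotter gives no bound on point--plane incidences in $\mathbb{R}^3$ (a single line lying in many planes already defeats it). Worse, for fixed $u$ the planes $\Pi_{u,\lambda}$ are all parallel, so your proposed ``projection of directions to $\mathbb{S}^2$'' collapses them to a single point and produces no incidence problem at all. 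The dangerous regime is precisely when the mass of $k_u(\lambda)$ sits on a few rich \emph{lines} inside $\Pi_{u,\lambda}$, which is invisible to any plane-counting scheme.

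The paper's proof resolves this by fixing the common vertex rather than the difference vector: for each $\xi_1$ it considers the two lines $\ell_{\xi_1\xi_2}$, $\ell_{\xi_1\xi_4}$ through $\xi_1$, sends their directions to $\mathbb{S}^2$, and observes that the condition $v_\ell\cdot Av_{\ell'}=0$ is exactly a point--great-circle incidence (Theorem~\ref{thm:pointcircleincidence}, via Lemma~\ref{crossing-incidence estimate with fixed point}). It then dyadically sorts lines by richness $s=\#(\ell\cap S)$ and uses Corollary~\ref{cor:ST} to bound $\sum_{\xi_1}\mathcal{J}^s_{\xi_1}\lesssim(\#S)^2/s^2$; the exponent $7/3$ comes from this double use of incidence bounds. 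Crucially, the very rich lines $s\ge(\#S)^{1/3}$ (your concentrated case) need a separate argument: for a fixed such $\ell$ and varying $\xi_1\in\ell$, the planes containing the partner lines $\ell'$ are pairwise disjoint \emph{because} $v_\ell\notin\cone$, so all partners together contribute only $O(\#S)$. Your proposed dichotomy ``in the spirit of Proposition~\ref{characterization of bad sets}'' points at the wrong mechanism --- that proposition concerns $\Omega_2$ and planes with normals on the cone, whereas here the concentrated case is handled by the disjointness observation above, not by a 2D Strichartz argument. As written, the proposal has a genuine gap at its central step.
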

This can be proved by the same method in \cite{Apfelbaum-Sharir2005}. For $\xi\in\mathbb{Z}^3$ and $\ell,\ell'$ two lines in $\mathbb R^3$, we denote
\begin{equation}
    \crf(\xi,\ell,\ell') = \begin{cases}
        1,& \text{if } \xi = \ell \cap \ell' \text{ and }v_\ell \cdot Av_{\ell'}=0,\\0,&\text{otherwise,}
    \end{cases}
\end{equation}
where $v_\ell$ denotes the direction vector of the line $\ell$.
We need the following lemma.
\begin{lem}\label{crossing-incidence estimate with fixed point}
    For fixed $\xi \in \mathbb{Z}^3$, let  $\mathcal{L},\mathcal{L'}$ be two finite families of lines passing through $\xi$. Then
    \begin{equation}
        \sum_{\ell \in \mathcal{L}} \sum_{\ell'\in\mathcal{L}'} \crf(\xi,\ell,\ell') \lesssim (\# \mathcal{L})^{2/3}(\#\mathcal{L}')^{2/3} + \#\mathcal{L}+\#\mathcal{L}'.
    \end{equation}
\end{lem}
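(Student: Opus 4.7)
The plan is to reduce the count to a point-circle incidence problem on the unit sphere $\mathbb{S}^2$ and then apply Theorem~\ref{thm:pointcircleincidence} directly. Since every line in $\mathcal{L} \cup \mathcal{L}'$ passes through the common point $\xi$, each such line is uniquely determined by its unit direction vector on $\mathbb{S}^2$. The antipodal ambiguity (sign of the direction vector) will be handled by choosing a single representative from a fixed hemisphere, which only costs absolute constants.

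The key geometric observation is that the Lorentzian orthogonality relation $v_\ell \cdot A v_{\ell'} = 0$ is linear in $v_{\ell'}$ for each fixed $v_\ell$. Consequently, for each $\ell \in \mathcal{L}$ the locus
\begin{equation}
C_\ell := \{ w \in \mathbb{S}^2 : w \cdot (A v_\ell) = 0 \}
\end{equation}
is a great circle on $\mathbb{S}^2$, since $A$ is invertible and $v_\ell \neq 0$ forces $A v_\ell \neq 0$. Distinct lines $\ell \in \mathcal{L}$ yield distinct normals $A v_\ell$ (as directions uniquely distinguish lines through $\xi$), and hence distinct great circles; similarly, distinct lines in $\mathcal{L}'$ correspond to distinct points on $\mathbb{S}^2$. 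The condition $v_\ell \cdot A v_{\ell'} = 0$ then translates exactly to the statement that the unit direction of $\ell'$ lies on $C_\ell$.

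Setting $\mathcal{P} = \{ v_{\ell'}/|v_{\ell'}| : \ell' \in \mathcal{L}' \}$ and $\mathcal{C} = \{ C_\ell : \ell \in \mathcal{L} \}$, so that $\#\mathcal{P} \lesssim \#\mathcal{L}'$ and $\#\mathcal{C} = \#\mathcal{L}$, I will obtain the bound
\begin{equation}
\sum_{\ell \in \mathcal{L}}\sum_{\ell' \in \mathcal{L}'} \crf(\xi,\ell,\ell') \ \leq\ \#\{(p, C) \in \mathcal{P} \times \mathcal{C} : p \in C\},
\end{equation}
where the diagonal $\ell = \ell'$ (forbidden in the original sum by the clause $\xi = \ell \cap \ell'$, which fails when $\ell = \ell'$) is harmlessly absorbed into the right-hand side. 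Applying Theorem~\ref{thm:pointcircleincidence} then yields precisely the claimed bound $O((\#\mathcal{L})^{2/3}(\#\mathcal{L}')^{2/3} + \#\mathcal{L} + \#\mathcal{L}')$.

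There is no genuine obstacle in this argument: the entire content of the lemma is the observation that the Lorentzian polar map $v \mapsto \{w \in \mathbb{S}^2 : w \cdot Av = 0\}$ converts the $A$-orthogonality condition on pairs of lines through $\xi$ into a classical point-great-circle incidence question on $\mathbb{S}^2$, to which Theorem~\ref{thm:pointcircleincidence} applies. The only bookkeeping concerns, namely the antipodal sign ambiguity in unit directions and the harmless inclusion of diagonal pairs in the incidence count, both contribute only absolute constants and do not affect the final estimate.
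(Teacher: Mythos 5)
Your proof is correct and is essentially identical to the paper's: both convert the condition $v_\ell\cdot Av_{\ell'}=0$ into a point--great-circle incidence on $\mathbb{S}^2$ via the polar map $v\mapsto\{w\in\mathbb{S}^2\mid w\cdot Av=0\}$ and then invoke Theorem~\ref{thm:pointcircleincidence} (the paper takes the points from $\mathcal{L}$ and the circles from $\mathcal{L}'$, which is immaterial by the symmetry of the bound). Your extra bookkeeping about antipodal representatives, distinctness of the circles, and the diagonal pairs is harmless and only makes the argument more careful.
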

\begin{proof}
For each line $\ell$, we denote its direction vector as $v_\ell \in \mathbb{S}^2$, and $c_\ell = \{ v\in \mathbb{S}^2 \mid v \cdot Av_{\ell} =0 \}$. Then $\crf(\xi,\ell,\ell') = 1$ is equivalent to $v_\ell \in c_{\ell'}$. Set $\mathcal{P} = \{ v_\ell   \mid \ell \in \mathcal{L} \}$    and $\mathcal{C} = \{ c_{\ell'} \mid   \ell'\in \mathcal{L}'  \}$, then the summation of $\crf(\xi,\ell,\ell')$ is bounded by the number of incidences between points in $\mathcal{P}$ and circles in $\mathcal{C}$, see Theorem~\ref{thm:pointcircleincidence}.
\end{proof}

\begin{figure}[htbp]
\begin{tikzpicture}[scale=1.1]
\draw[fill] (0,0) circle (0.03);
\draw (0,0) circle (2);
\draw[blue] (-2,0) arc (180:360:2 and 0.5);
\draw[dashed,blue] (2,0) arc (0:180:2 and 0.5);
\draw[->,thick,blue] (0,0) -- (0,3);
\draw[->,thick,red] (0,0) -- (12/5,-9/5);
\draw[fill=red] (0.6325,-0.4743) circle (0.03);
\node[above] at (0.6325,-0.4743) {$v_\ell$};
\node[right] at (12/5,-9/5) {$\ell$};
\node[left] at (0,3) {$\ell'$};
\node[right] at (-2,0) {$c_{\ell'}$};
\end{tikzpicture}
\centering
\end{figure}

\begin{proof}[Proof of Proposition~\ref{number of parallelograms without side on cone}]
    For each dyadic integer $s$, we put $\mathcal{L}_s$ to be the set of lines $\ell$ such that $s\leq \#(\ell\cap S) < 2s$ and the direction vector of $\ell$ doesn't belong to $\cone$. Then
    \begin{align}
        \Omega_1(f) &= \sum_{s,t\text{ dyadic}} \#\{ {(\xi_1,\xi_2,\xi_3,\xi_4)}\in \mathcal{Q}_1 \mid \ell_{\xi_1\xi_2} \in \mathcal{L}_s,\ell_{\xi_1,\xi_4}\in\mathcal{L}_t \} \\&
        \leq \sum_{s,t\text{ dyadic}} \sum_{\ell \in \mathcal{L}_s} \sum_{\ell'\in \mathcal{L}_t} \sum_{\xi_1\in S} \crf(\xi_1,\ell,\ell')\#(\ell\cap S)\#(\ell'\cap S). \label{bound:Omega1-inci}
    \end{align}
  For $s,t\leq (\#S)^{1/3}$, we use Lemma~\ref{crossing-incidence estimate with fixed point} to estimate
    \begin{align}
        \sum_{s,t\text{ dyadic}}  \sum_{\xi_1\in S}st \Big( \sum_{\ell \in \mathcal{L}_s} \sum_{\ell'\in \mathcal{L}_t} \crf(\xi_1,\ell,\ell') \Big) \lesssim \sum_{s,t\text{ dyadic}} st \sum_{\xi_1\in S}( (\mathcal{J}_{\xi_1}^s\mathcal{J}_{\xi_1}^t)^{2/3} + \mathcal{J}_{\xi_1}^s+\mathcal{J}_{\xi_1}^t ),
    \end{align}
    where $\mathcal{J}_{\xi_1}^s,\mathcal{J}_{\xi_1}^t$ denote the number of lines in $\mathcal{L}_s,\mathcal{L}_t$ passing through $\xi_1$. By Corollary~\ref{cor:ST}, we have
    \[
    \sum_{\xi_1\in S} \mathcal{J}_{\xi_1}^s \lesssim \frac{(\#S)^2}{s^2},\quad \sum_{\xi_1\in S} \mathcal{J}_{\xi_1}^t \lesssim \frac{(\#S)^2}{t^2},
    \]
    hence
    \begin{equation}
        \sum_{s,t \leq (\#S)^{1/3}\text{ dyadic}} st \sum_{\xi_1\in S}(\mathcal{J}_{\xi_1}^s+\mathcal{J}_{\xi_1}^t) \lesssim (\#S)^{7/3}.
    \end{equation}
    On the other hand, since all the lines passing through $\xi_1$ are pairwise disjoint (excluding the common point $\xi_1$), we have $\mathcal{J}_{\xi_1}^s \lesssim \#S/s$ and $\mathcal{J}_{\xi_1}^t \lesssim \#S/t$. Therefore,
    \begin{align}
        \sum_{s,t\text{ dyadic}} st \sum_{\xi_1\in S} (\mathcal{J}_{\xi_1}^s\mathcal{J}_{\xi_1}^t)^{2/3} &\lesssim \sum_{s,t\text{ dyadic}} st \sum_{\xi_1 \in S} \left( \frac{(\#S)^2}{st} \right)^{1/6} (\mathcal{J}_{\xi_1}^s\mathcal{J}_{\xi_1}^t)^{1/2} \\&\leq \sum_{s,t\text{ dyadic}} {(\#S)^{1/3}}{(st)^{5/6}} \Big( \sum_{\xi_1\in S} \mathcal{J}_{\xi_1}^s \Big)^{1/2}\Big( \sum_{\xi_1\in S}  \mathcal{J}_{\xi_1}^t\Big)^{1/2} \\&
        \lesssim \sum_{s,t\text{ dyadic}} {(\#S)^{1/3}}{(st)^{5/6}}\frac{\#S}{s}\frac{\#S}{t} \lesssim (\#S)^{7/3}.
    \end{align}
    This proves \eqref{bound:Omega1} when  $s,t\leq (\#S)^{1/3}$.
    
    For $s\geq(\#S)^{1/3}$ or $t\geq (\#S)^{1/3}$, we assume $s\geq (\#S)^{1/3}$ without loss of generality. We notice that for any fixed $\ell$ and $\xi_1\in\ell$, the lines $\ell'$ such that $\crf(\xi_1,\ell,\ell')\neq 0$ belong to some plane determined by $\ell$ and $\xi_1$. Furthermore, for different choices of $\xi_1$, these planes are pairwise disjoint due  to the fact that the direction vector of $\ell$ doesn't belong to $\cone$. Thus all these lines $\ell'$ are pairwise disjoint (excluding the possible common points on $\ell$), and
    \begin{equation}
       \sum_t \sum_{\ell'\in\mathcal{L}_t} \sum_{\xi_1\in S}  \crf(\xi_1,\ell,\ell') \#(\ell'\cap S) \lesssim \#S. \label{est:case2-1}
    \end{equation}
    On the other hand,
    by Corollary~\ref{cor:ST} we have
    \begin{equation}
        \sum_{s\geq (\#S)^{1/3}} \sum_{\ell \in \mathcal{L}_s} \#(\ell \cap S) \lesssim (\#S)^{4/3}.\label{est:case2-2}
    \end{equation}
\eqref{bound:Omega1-inci}, \eqref{est:case2-1} and \eqref{est:case2-2} together imply the bound \eqref{bound:Omega1}.
\end{proof}
\subsection{Proof  of Theorem~\ref{sharp T3 L4 hyperbolic Strichartz}}
Proposition~\ref{number of parallelograms without side on cone} deals with characteristic functions. To extend this result to the general case, we employ an atomic decomposition that reduces an arbitrary function to a sum of characteristic functions.
This leads us to the task of estimating the multilinear form $\Omega_1(f_1, f_2, f_3, f_4)$ with $\operatorname{supp} f_i\subset S_i$.  A similar problem was tackled by Herr-Kwak~\cite{Herr-Kwak}, who performed a very careful analysis to bound the number of parallelograms in terms of the size of the sets $S_i$. However, the problem becomes more intricate in 3 dimensions. To overcome this, we apply Proposition~\ref{characterization of bad sets} to further reduce the problem to the case where the support of each function $f_i$ lies in a plane $H_i\in \mathcal{H}(\cone^{\mathrm{irr}}_M)$. The particular geometric structure of these planes becomes crucial for obtaining the desired estimate.  Now we turn to the details. 

\begin{proof}[Proof of Theorem~\ref{sharp T3 L4 hyperbolic Strichartz}]

We set $f = |\hat\phi| \chi_{[-N,N]^3}$  and enumerate $\mathbb{Z}^3 \cap [-N,N]^3$ as $\xi_1,\xi_2,\dots$ such that
\begin{equation}
    f(\xi_1) \geq f(\xi_2) \geq \dots.
\end{equation}
Let $S_j = \{ \xi_{2^{j}},\dots,\xi_{2^{j+1}-1} \}$, $f_j = f(\xi_{2^j}) \chi_{S_j}$ and $\lambda_j = 2^{j/2} f(\xi_{2^j})$ for $0\leq j \leq {j_{\text{max}}}$ with $ 2^{j_{\text{max}}}\lesssim N^3$. We have 
    $\# S_j \leq 2^j$, $f \leq \sum_{j} f_j$, $|\lambda_j|=\|f_j\|_{\ell^2(\mathbb{Z}^3)}$  and
    \begin{equation}
        \| \lambda_j\|_{\ell^2_{j\leq {j_{\text{max}}}}}^2 = \sum_{j=0}^{{j_{\text{max}}}} 2^j|f(\xi_{2^j})|^2 \leq |f(\xi_1)|^2 + \sum_j \| f\chi_{S_{j-1} }\|_{\ell^2(\mathbb Z^3)}^2 \lesssim \|f\|_{\ell^2(\mathbb Z^3)}^2. \label{est:lambda-f}
    \end{equation}
Let $\delta>0$ sufficiently small.
Given such a decomposition of $f$, we say $j$ is good if $\Omega_2(f_j) \lesssim N^{1-\delta} \| f_j \|_{\ell^2(\mathbb Z^3)}^4$, otherwise we say $j$ is bad. 
For each bad $j$, by taking $M=N^\delta$  in Proposition~\ref{characterization of bad sets}, we can find at most $O(N^{3\delta})$
planes $\{H_{j}^{i_j}\} \subset \mathcal{H}(\cone_M^{\mathrm{irr}})$, such that
$f_j^{\mathrm{error}}:=f\chi_{S_j \setminus \cup_{i_j} H_{j}^{i_j}}$ satisfies
    \begin{equation}
        \Omega_2(f_j^{\mathrm{error}}) \lesssim N^{1-\delta} \| f_j\|_{\ell^2(\mathbb Z^3)}^4. \label{def:fgood}
    \end{equation}
We denote $f_j^{\mathrm{good}}=f_j^{}$ if $j$ is good and $f_j^{\mathrm{good}}=f_j^{\mathrm{error}}$ if $j$ is bad, and
\begin{equation}
    f_{\text{bad}} = \sum_{j \text{ bad} } (f_j^{}-f_j^{\mathrm{good}}).
\end{equation}
Then from Proposition~\ref{number of parallelograms without side on cone} and \eqref{def:fgood}, we get bounds for  each $f_j^{\mathrm{good}}$
\begin{align}
     \| \me^{\mi t\Box} \check{f}_j^\text{good} \|^4_{{L_{t,x}^4([0,1]\times \mathbb T^3)}} &= \Omega_1(f_j^\text{good}) + \Omega_2(f_j^\text{good})\\
    & \lesssim  (2^{{j}/{3}}+N^{1-\delta})\|f_j^\text{good}\|^4_{\ell^2(\mathbb{Z}^3)}. \label{est:f-good}
\end{align}
Now we control the contribution of $f_{\text{bad}}$, which can be written as 
\begin{align}
     &\| \me^{\mi t\Box} \check f_{\text{bad}}\|_{{L_{t,x}^4([0,1]\times \mathbb T^3)}}^4 = \Omega_1(f_{\text{bad}}) +\Omega_2(f_{\text{bad}})  .
\end{align} 
Proposition~\ref{number of parallelograms with side on cone} indicates that $\Omega_2(f_{\text{bad}}) \lesssim N \| f_{\text{bad}} \|_{{\ell^2(\mathbb Z^3)}}^4$. Meanwhile, we have the  estimate
\begin{align}
    \Omega_1(f_{\text{bad}}) &\lesssim  \sum_{\substack{j_1,j_2,j_3,j_4 \text{ bad} \\ i_{j_1},i_{j_2},i_{j_3},i_{j_4} \lesssim N^{3\delta}}} \Omega_1(f_{j_1}\chi_{H_{j_1}^{i_{j_1}}},f_{j_2}\chi_{H_{j_2}^{i_{j_2}}}, f_{j_3}\chi_{H_{j_3}^{i_{j_3}}}, f_{j_4}\chi_{H_{j_4}^{i_{j_4}}}) \\
    &\lesssim \sum_{j_1,j_2,j_3,j_4} N^{3/4+O(\delta)} \prod_{k=1}^4 \| f_{j_k} \|_{\ell^2(\mathbb Z^3)}
    \lesssim N \| f_{\text{bad}}\|_{\ell^2(\mathbb Z^3)}^4,
\end{align}
which is a consequence of applying the following Proposition~\ref{prop:Omega1-4f} to each quadruple $(i_{j_1},i_{j_2},i_{j_3},i_{j_4})$ with $M=N^\delta$.

Combining all the estimates, we get
\begin{align}
    \| \me^{\mi t\Box}\check{f} \|_{{L_{t,x}^4([0,1]\times \mathbb T^3)}} &\lesssim  \sum_{j} \| \me^{\mi t\Box} \check{f}_j^\text{good} \|_{{L_{t,x}^4([0,1]\times \mathbb T^3)}} + \| \me^{\mi t\Box} \check f_{\text{bad}}\|_{{L_{t,x}^4([0,1]\times \mathbb T^3)}}\\&
    \lesssim \sum_{j \leq {j_{\text{max}}}} (2^{j/12} + N^{(1-\delta)/4})\lambda_j +  \| \me^{\mi t\Box} \check f_{\text{bad}}\|_{{L_{t,x}^4([0,1]\times \mathbb T^3)}}\\&
    \lesssim N^{1/4} \| \lambda_j \|_{\ell_{j\leq {j_{\text{max}}}}^2} +  \| \me^{\mi t\Box} \check f_{\text{bad}}\|_{{L_{t,x}^4([0,1]\times \mathbb T^3)}}\\
    &\lesssim N^{1/4} \| f\|_{\ell^2(\mathbb Z^3)}.  
\end{align}
From \eqref{formula:L4}, we see $\| \me^{\mi t\Box}\phi \|_{{L_{t,x}^4([0,1]\times \mathbb T^3)}}\leq \| \me^{\mi t\Box}\check{f} \|_{{L_{t,x}^4([0,1]\times \mathbb T^3)}}$. Thus we finish the proof for the $L^4$ estimate \eqref{est:L4} 
, and the general case \eqref{main-est} follows from interpolation of $L^4$ with $L^2$ and $L^\infty$ estimates.

\end{proof}

Now we are left to prove the following:
\begin{prop}\label{prop:Omega1-4f}
    Suppose $n_1,n_2,n_3,n_4 \in \cone^{\mathrm{irr}}_M $ are vectors and $H_1,H_2,H_3,H_4$ are planes such that  $n_j\perp H_j$  for each $j$, and functions $f_j \colon \mathbb Z^3 \to \mathbb R_+$ supported on ${  H_j \cap [-N,N]^3}$. Then 
    \begin{equation}
        \Omega_1{(f_1,f_2,f_3,f_4)}  \lesssim M^{2}N^{1/2}(M^2+N^{1/4}) \prod_{j=1}^4 \| f_j\|_{\ell^2(\mathbb Z^3)}. \label{est:prop-4f}
    \end{equation}
\end{prop}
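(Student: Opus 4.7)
The plan is to parametrize the quadruples in $\mathcal Q_1$ via the side vector $u := \xi_2 - \xi_1 = \xi_3 - \xi_4$ and the diagonal vector $r := \xi_1 - \xi_4$. In these variables, the planar supports translate into the linear conditions $n_2 \cdot u = c_2 - n_2 \cdot \xi_1$ and $n_3 \cdot u = n_3 \cdot \xi_4 - c_3$, while the orthogonality condition becomes $u \cdot Ar = 0$. Thus for fixed $(\xi_1,\xi_4)$ the vector $u$ satisfies three linear equations with normals $n_2, n_3, Ar$. I will split $\Omega_1 = \Omega_1^{\mathrm{gen}} + \Omega_1^{\mathrm{deg}}$ according to whether these three normals are linearly independent (generic case) or dependent (degenerate case, which, using $A^2 = I$, is equivalent to $r$ lying in the $2$-dimensional subspace $V := \mathrm{span}(An_2, An_3)$).

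For the generic piece, a Cauchy--Schwarz in $(\xi_1,\xi_4)$ gives
\begin{equation*}
\Omega_1^{\mathrm{gen}} \le \|f_1 \otimes f_4\|_{\ell^2}\,\|H\|_{\ell^2(\xi_1,\xi_4)},\quad H(\xi_1,\xi_4) := \sum_{(\xi_2,\xi_3)\text{ valid}} f_2(\xi_2)f_3(\xi_3),
\end{equation*}
and in the generic case the map $(\xi_1,\xi_4) \mapsto (\xi_2,\xi_3)$ is a bijection onto its image, so $\|H\|_{\ell^2} \le \|f_2\|_{\ell^2}\|f_3\|_{\ell^2}$. This yields $\Omega_1^{\mathrm{gen}} \lesssim \prod_j \|f_j\|_{\ell^2}$, which is dominated by the target bound $M^2 N^{1/2}(M^2+N^{1/4})\prod_j\|f_j\|_{\ell^2}$.

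The main work is the degenerate piece. Since $|An_j| = |n_j| \le M$ and $n_j$ is primitive, the set $V \cap [-2N,2N]^3$ contains $O(N^2/(|n_2||n_3|))$ lattice points. For each such $r$ (which, by the $\mathcal Q_1$-condition $r \notin \cone$, excludes a negligible subset of $V$), consistency of the three equations on $u$ imposes one additional linear condition on $\xi_1$; combined with $\xi_1 \in H_1$ and $\xi_4 = \xi_1 - r \in H_4$, this generically determines $(\xi_1,\xi_4)$ uniquely, while the admissible $u$'s sweep out a line in $\mathbb{Z}^3$ of length $O(N)$. Applying Cauchy--Schwarz first along this line and then across $r \in V$, the bound reduces to controlling $\sum_{r\in V}\|f_2\chi_{\ell_2(r)}\|_{\ell^2}^2\|f_3\chi_{\ell_3(r)}\|_{\ell^2}^2$, where $\ell_j(r) \subset H_j$ is a line parametrizing $\xi_j$.

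The families $\{\ell_2(r)\}_r, \{\ell_3(r)\}_r$ all share a common direction (the null direction in the corresponding plane, related to $An_2$ or $An_3$), so a line-in-plane incidence argument -- in the spirit of Theorem~\ref{Szemeredi-Trotter theorem} and Corollary~\ref{cor:ST} -- together with the lattice-density bound $|H_j \cap B_N| \lesssim N^2/|n_j|$ and the size estimate $\#\cone^{\mathrm{irr}}_M \lesssim M$ from Lemma~\ref{lem:sizeCone-irr}, delivers the factor $M^2 N^{1/2}(M^2 + N^{1/4})$. The $N^{1/4}$ term in the dichotomy comes from applying Proposition~\ref{number of parallelograms without side on cone} to each planar support (treated as a $2$-dimensional set of size $\le N^2$), while the $M^2$ term comes from the direct $2$-dimensional count on $V$. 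The main obstacle is the incidence analysis in the sub-cases where $V$ collapses (e.g.\ $n_2 \parallel n_3$, or $n_2 \cdot An_3 = 0$), or where the symmetric degeneration $Ar \in \mathrm{span}(n_1,n_4)$ takes over; in these sub-cases, a careful case-by-case treatment is needed to track the $1/|n_j|$-density corrections and extract the precise $M^2(M^2+N^{1/4})$ form of the bound.
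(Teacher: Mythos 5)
Your parametrization by the side vector $r=\xi_1-\xi_4$ together with the linear system for $u=\xi_2-\xi_1$ is a genuinely different organization from the paper's, which instead fixes the pair $(a,b)=(\xi_1+\xi_3,\,h(\xi_1)+h(\xi_3))$ and reduces everything to counting lattice points of $\mathcal{A}_{a,b}=\{\xi\mid h(\xi)+h(a-\xi)=b\}$ on each plane $H_j$. However, the proposal has genuine gaps. First, in the generic case your inequality $\|H\|_{\ell^2}\le\|f_2\|_{\ell^2}\|f_3\|_{\ell^2}$ requires the map $(\xi_1,\xi_4)\mapsto(\xi_2,\xi_3)$ to be injective, but your genericity hypothesis (independence of $n_2,n_3,Ar$) only makes the \emph{forward} fibers singletons; the reverse fibers are governed by the triple $(n_1,n_4,Ar)$, which can be dependent even when $(n_2,n_3,Ar)$ is independent, so a single $(\xi_2,\xi_3)$ may be reached from a line's worth of pairs $(\xi_1,\xi_4)$ and the $\ell^2$ bound fails as stated. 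This is repairable by symmetrizing the dichotomy, but that enlarges the degenerate case, which is where all of the content of \eqref{est:prop-4f} lives.

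Second and more seriously, the degenerate case is not actually carried out, and the two mechanisms you cite for producing the factor $M^{2}N^{1/2}(M^2+N^{1/4})$ do not work as described. Proposition~\ref{number of parallelograms without side on cone} applies only to a single characteristic function $\chi_S$ and does not yield a bound of the multilinear form $\prod_j\|f_j\|_{\ell^2}$ when the four supports have unrelated sizes (for instance, bounding $\Omega_1(f_1,\dots,f_4)$ by $\Omega_1(\chi_{\cup S_j})\lesssim(\#\cup S_j)^{7/3}$ is far too lossy if one $S_j$ is much smaller than the others); overcoming exactly this obstruction is the purpose of Proposition~\ref{prop:Omega1-4f}. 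The line-incidence count is asserted rather than performed, and the sub-cases you defer (collapsing $V$, the symmetric degeneration $Ar\in\operatorname{span}\{n_1,n_4\}$, dependent consistency conditions on $\xi_1$) are precisely the configurations that occupy the bulk of the paper's argument (its Case~2). Finally, the exponent $N^{1/2}$ in the paper comes from an arithmetic ingredient that has no counterpart in your scheme: in the null coordinates $\{n_j,An_j,n_j\times An_j\}$ on $H_j$, the set $\mathcal{A}_{a,b}\cap H_j$ lies on a parabola $z^2=qy+\omega$ with $|q|\lesssim M^8N$, and a prime-factorization argument shows such a parabola carries only $O(\sqrt{N}+\sqrt{q})=O(M^4N^{1/2})$ lattice points in $[-N,N]^2$. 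Without this count, or a worked-out substitute, the stated exponents are not derived.
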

\begin{proof}
    We decompose $\mathcal{Q}_1$ into
    \begin{equation}
        \bigcup_{(a,b) \in \mathbb{Z}^3\times \mathbb{Z} } \Gamma_{a,b} :=\bigcup \Bigg\{ {(\xi_1,\xi_2,\xi_3,\xi_4)} \in \mathcal{Q}_1 \Biggm| \begin{array}{cc} \xi_1+\xi_3=a=\xi_2+\xi_4 \\ h(\xi_1)+h(\xi_3) = b = h(\xi_2)+h(\xi_4)\end{array}\Bigg\}.
    \end{equation}
Also we denote $\mathcal{A}_{a,b} = \{ \xi \in \mathbb Z^3 \mid h(\xi)+h(a-\xi) = b \}$. Then ${(\xi_1,\xi_2,\xi_3,\xi_4)} \in \Gamma_{a,b}$ implies
$\xi_j \in \mathcal{A}_{a,b}$ for $1\leq j\leq 4$. It's easy to see that
\begin{align}
 \Omega_1{(f_1,f_2,f_3,f_4)} &= \sum_{a,b} \sum_{\Gamma_{a,b}} f_1(\xi_1)f_2(\xi_2)f_3(\xi_3)f_4(\xi_4) \label{est:4f-Gamma} \\&
    = \sum_{a,b} \Big( \sum_{\substack{\xi_1,\xi_3\in \mathcal{A}_{a,b}\\\xi_1+\xi_3=a}} f_1(\xi_1)f_3(\xi_3) \Big)\Big( \sum_{\substack{\xi_2,\xi_4\in \mathcal{A}_{a,b}\\\xi_2+\xi_4=a}} f_2(\xi_2)f_4(\xi_4) \Big) . \\\label{est:4f-Aab}
\end{align}
By Cauchy-Schwarz, it's further bounded by
\begin{equation}
    \bigg( \sum_{a,b} \Big( \sum_{\substack{\xi_1 \in \mathcal{A}_{a,b} }} f_1(\xi_1)f_3(a-\xi_1) \Big)^2 \bigg)^{1/2} \bigg( \sum_{a,b} \Big( \sum_{\substack{\xi_2 \in \mathcal{A}_{a,b} }} f_2(\xi_2)f_4(a-\xi_2) \Big)^2 \bigg)^{1/2}.\label{est:4f-Aab-CS}
\end{equation}

Next we estimate the size of $\mathcal{A}_{a,b}\cap H_1\cap[-N,N]^3$, the same argument holds for $j=2,3,4$. Since $n_1\in \cone_M^{\mathrm{irr}}$ is the normal vector of $H_1$ and $n_1 \perp An_1$, hence $\{n_1,An_1,n_1\times An_1\}$ is an orthogonal basis of $\mathbb R^3$. We decompose $\xi$ with respect to this  orthogonal basis and denote
\begin{equation}
    \xi^\perp = \xi - \frac{  \xi\cdot n_1 }{|n_1|^2} n_1 - \frac{\xi\cdot An_1}{|n_1|^2}An_1 \in \frac{1}{|n_1|^2} \mathbb{Z}^3. \label{definition of perp part}
\end{equation}
Clearly $\xi^\perp$ belongs to the one-dimensional linear subspace spanned by $n_1\times An_1$. For $\xi\in  H_1$, the inner product $\xi\cdot n_1 = c_1$ is constant, and we compute $h(\xi)$ by using the decomposition \eqref{definition of perp part}
\begin{align}
    h(\xi) &= \xi \cdot A\xi = \xi^\perp \cdot A\xi^\perp + (\xi - \xi^\perp)\cdot A(\xi-\xi^\perp) + 2\xi^\perp \cdot A(\xi-\xi^\perp) \\
    &= h(\xi^\perp) + 2c_1 \frac{\xi\cdot An_1}{|n_1|^2} .
\end{align}
As a result, it holds that for $\xi \in \mathcal{A}_{a,b} \cap H_1$
\begin{align}
   b &= h(\xi) + h(a-\xi) = 2h(\xi)+h(a)-2\xi\cdot Aa \\& = 2h(\xi^\perp) + 4c_1 \frac{ \xi \cdot An_1 }{|n_1|^2} + h(a) -2\xi^\perp\cdot Aa - \frac{2( c_1 a\cdot An_1 + (\xi_1\cdot An_1)(a\cdot n_1) )}{|n_1|^2} 
\end{align}
i.e.
\begin{equation}
    \frac{4c_1 - 2a\cdot n_1}{|n_1|^2} \xi\cdot An_1 = -2h(\xi^\perp)+2\xi^\perp\cdot Aa + \Big[ b - h(a) + \frac{2c_1}{|n_1|^2} a\cdot An_1 \Big]. \label{equation of xi,a,b}
\end{equation}

\begin{figure}[htbp]
\centering
\subcaptionbox{$a\cdot n_1\neq 2c_1$}{
\begin{tikzpicture}[scale = 2.1]
    \draw[->,thick] (0,0) -- (0,1);
    \draw[->,thick] (0,0) -- (24/25,-7/25);
    \draw[->,thick] (0,0) -- (-3/5,-4/5);
    \draw (-3/5,-4/5) -- (9/25,-28/25);
    \draw (24/25,-7/25) -- (9/25,-28/25);
    \draw[very thick,blue,dashed] (-12/25,-18/25) .. controls (0.46,-0.06) .. (1/5,-1);
    \node[above right,fill=white,font=\small] at (9/25,-28/25) {$H_1$};
    \node[left,font=\small] at (0,1) {$n_1$};
    \node[right,font=\small] at (24/25,-7/25) {$n_1\times An_1$};
    \node[left,font=\small] at (-3/5,-4/5) {$ An_1$};
    \node[below left,blue,font=\small] at (0.18,-0.56) {$\mathcal{A}_{a,b}$};
\end{tikzpicture}
}
\subcaptionbox{$a\cdot n_1=2c_1$}{
\begin{tikzpicture}[scale = 2.1]
    \draw[->,thick] (0,0) -- (0,1);
    \draw[->,thick] (0,0) -- (24/25,-7/25);
    \draw[->,thick] (0,0) -- (-3/5,-4/5);
    \draw (-3/5,-4/5) -- (9/25,-28/25);
    \draw (24/25,-7/25) -- (9/25,-28/25);
    \draw[very thick,blue,dashed] (0.24,-0.07) -- (-0.36,-0.87);
    \draw[very thick,blue,dashed] (0.72,-0.21) -- (0.12,-1.01);
    \node[above right,fill=white,font=\small] at (9/25,-28/25) {$H_1$};
    \node[left,font=\small] at (0,1) {$n_1$};
    \node[right,font=\small] at (24/25,-7/25) {$n_1\times An_1$};
    \node[left,font=\small] at (-3/5,-4/5) {$ An_1$};
    \node[blue,font=\small] at (0.18,-0.56) {$\mathcal{A}_{a,b}$};
\end{tikzpicture}
}
\end{figure}
The following lines are dedicated to
further simplify \eqref{equation of xi,a,b},
which gives us the desired  estimate of the size of $\mathcal{A}_{a,b}\cap H_1\cap[-N,N]^3$. 
As in \eqref{definition of perp part}, we denote
\begin{equation}
    a^\perp = a - \frac{  a \cdot n_1 }{|n_1|^2} n_1 - \frac{a \cdot An_1}{|n_1|^2}An_1 \in \frac{1}{|n_1|^2} \mathbb Z^3.  \label{a perp part}
\end{equation}
There exists some $\eta \in \mathbb {Z}^3$ such that $|\eta| \leq |n_1\times An_1| \leq M^2$ and $2\xi^\perp - a^\perp = \frac{\lambda}{|n_1|^2} \eta$ for some $\lambda \in\mathbb{Z}$. Since the matrix $A$ is non-singular, $h(\eta)$ must be non-zero.
Now \eqref{equation of xi,a,b} can be written as
\begin{align}
   \frac{\lambda^2}{|n_1|^4} h(\eta) &=  h(2\xi^\perp-a^\perp) \\
   &= 2\left[ b+h(a^\perp)-h(a)+\frac{2c_1}{|n_1|^2}a\cdot n_1 \right] - \frac{8c_1-4a\cdot n_1}{|n_1|^2}\xi\cdot An_1.
\end{align}
Set $z = \lambda h(\eta)$, $y = |n_1|^2 \xi \cdot An_1$ and
\begin{align}
    q&= h(\eta )( 8c_1-4a_1\cdot n_1) , \\
    \omega&=2|n_1|^4h(\eta )\left[ b+h(a^\perp)-h(a)+\frac{2c_1}{|n_1|^2}a\cdot n_1 \right],
\end{align}
then $(y,z,q,\omega)$ is an integer solution of 
\begin{equation} 
z^2=qy+\omega, \quad |q|,|y|,|z| \lesssim M^8N. \label{eq:claim}\end{equation}

For given $a,b$, if $a \cdot n_1 \neq 2c_1$, which is equivalent to $a/2 \notin H_1$, then from \eqref{equation of xi,a,b}, $\xi\cdot An_1$ is determined by $\xi^\perp$.  From \eqref{definition of perp part} , $\xi\in \mathcal{A}_{a,b} \cap H_1$ is determined by $\xi^\perp$ since $\xi\cdot n_1=c_1$.
Let us state the following claim, postponing its proof for later.  

\textbf{Claim:} For given  $N,q\in\mathbb Z_+$ and $\omega \in \mathbb Z$,
  \begin{equation}
       \#\Big( \{ (y,z)\in \mathbb Z^2 \mid z^2= qy+\omega \} \cap [-N,N]^2\Big)\lesssim \sqrt{N}+\sqrt{q}.
  \end{equation}
Applying the claim to \eqref{eq:claim}, we see that  there are $O(M^{4}N^{1/2})$ many choices of $\xi^\perp$, hence 
\begin{equation}
    \#(\mathcal{A}_{a,b}\cap H_1\cap[-N,N]^3) \lesssim M^{4} N^{1/2}.\label{Aab-bound:curve}
\end{equation}

On the other hand, if $a\cdot n_1 =  2c_1$, i.e. $a/2\in H_1$, the left hand side of \eqref{equation of xi,a,b} is $0$ and we have at most two choices of $\xi^\perp$, hence from \eqref{definition of perp part} we know $\xi$ belongs to the union of two lines contained in $H_1$, 
\begin{equation}
    \#(\mathcal{A}_{a,b} \cap H_1\cap[-N,N]^3) \lesssim N. \label{Aab-bound:line}
\end{equation}

Now we prove the main estimate \eqref{est:prop-4f}.

\textbf{Case 1: }    
  $a\cdot n_{j_0} \neq 2c_{j_0}$, i.e. $a/2\notin H_{j_0}$, for some $1\leq j_0\leq 4$. Without loss of generality, we assume $j_0=1$. By Cauchy-Schwarz and \eqref{Aab-bound:curve} we have
    \begin{align}
       \Big( \sum_{\xi \in \mathcal{A}_{a,b} } f_{1}(\xi) f_{3}(a-\xi) \Big)^2 \lesssim M^{4} N^{1/2} \sum_{\xi\in\mathcal{A}_{a,b}} |f_{1}(\xi)|^2 | f_{3}(a-\xi)|^2,
    \end{align}
    summing over $a,b$ gives that
    \begin{equation}
        \sum_{\frac a2 \notin H_1} \sum_b \Big( \sum_{\xi \in \mathcal{A}_{a,b} } f_{1}(\xi) f_{3}(a-\xi) \Big)^2  \lesssim M^{4}N^{1/2} \| f_1 \|_{\ell^2(\mathbb Z^3)}^2 \| f_3 \|_{\ell^2(\mathbb Z^3)}^2.
    \end{equation}
    On the other hand 
    \begin{align}
\sum_{a,b}       \Big( \sum_{\xi \in \mathcal{A}_{a,b} } f_{2}(\xi) f_{4}(a-\xi) \Big)^2 &\lesssim \max\{ M^4N^{1/2}, N\} \sum_{a,b} \sum_{\xi\in\mathcal{A}_{a,b}} |f_{2}(\xi)|^2 | f_{4}(a-\xi)|^2 \\& 
\lesssim  \max\{M^4N^{1/2},N\} \| f_2 \|_{\ell^2(\mathbb Z^3)}^2 \| f_4 \|_{\ell^2(\mathbb Z^3)}^2,
    \end{align}
    as a result
\begin{align}
    \sum_{\frac a2 \notin H_1} \sum_b \sum_{\Gamma_{a,b}} f_1(\xi_1)f_2(\xi_2)f_3(\xi_3)f_4(\xi_4)
\lesssim M^{2}N^{1/2}(M^2+N^{1/4}) \prod_{j=1}^4 \| f_j \|_{\ell^2(\mathbb Z^3)}.
\end{align}

\textbf{Case 2: } $a/2 \in H_j $ for all $j$. 

\textbullet\ If $\dim \operatorname{span}_\mathbb{R} \{n_1,n_2,n_3,n_4\} = 3$. 
In this case there are at most one such $a$, and hence from \eqref{est:4f-Aab}
\begin{align}
   & \sum_{\frac{a}{2} \in \cap_j H_j} \sum_b \sum_{\Gamma_{a,b}} f_1(\xi_1)f_2(\xi_2)f_3(\xi_3)f_4(\xi_4)\\
    \lesssim &   \Big(\sum_{\xi_1} f_1(\xi_1)f_3(a-\xi_1)\Big)\Big( \sum_{\xi_2} f_2(\xi_2)f_4(a-\xi_2) \Big) 
    \leq \prod_{j=1}^4 \| f_j \|_{\ell^2(\mathbb Z^3)}.
\end{align}

\textbullet\ If $\dim \operatorname{span}_\mathbb{R} \{n_1,n_2,n_3,n_4\} = 2$.  
Let $2\leq j \leq 4$ be such that $$\operatorname{span}_\mathbb{R} \{ n_1,n_j\} = \operatorname{span}_{\mathbb{R}} \{ n_1,n_2,n_3,n_4 \}, $$ we will prove 
\begin{equation}
    \sum_{\frac a2\in H_1\cap H_j} \sum_{b} \Big( \sum_{\substack{\xi \in \mathcal{A}_{a,b} }} f_1(\xi)f_3(a-\xi) \Big)^2
    \lesssim \| f_1\|_{\ell^2(\mathbb Z^3)}^2 \| f_3 \|_{\ell^2(\mathbb Z^3)}^2.
\end{equation}
By symmetry, the same estimate holds for $f_2,f_4$.

Recall the definition of $a^\perp$ in \eqref{a perp part},
then we can write that
\begin{equation}
    2c_j = a\cdot n_j = \left[ a^\perp \cdot n_j + \frac{2c_1}{|n_1|^2}{ n_1\cdot n_j} \right] + \frac{a\cdot An_1}{|n_1|^2} An_1\cdot n_j.
\end{equation}
We claim that $An_1 \cdot n_j = n_1\cdot An_j\neq 0$. Otherwise, since $n_1,n_j \in \cone$, it holds that
\begin{equation}
    \operatorname{span}_{\mathbb{R}}\{An_1,An_j\} \perp  \operatorname{span}_{\mathbb{R}}\{n_1,n_j\},
\end{equation}
but their dimensions are both $2$
, which is a contradiction.
As a result, we can solve $a\cdot An_1$ in terms of $a^\perp$, and hence $a$ is determined by $a^\perp$.

 We set $\mathcal{A}_{a,b}^\perp = \{ \xi^\perp \mid \xi \in \mathcal{A}_{a,b} \}$, where $\xi^\perp$ is defined as \eqref{definition of perp part}. Then 
 \begin{equation}
  \mathcal{A}_{a,b} =\bigcup_{\beta\in \mathcal{A}_{a,b}^\perp}\{\xi \in \mathcal{A}_{a,b} \mid \xi^\perp=\beta\}  .
 \end{equation}
 From \eqref{equation of xi,a,b} we see that $\# \mathcal{A}_{a,b}^\perp \leq 2$ for each $a,b$.
    
By Cauchy-Schwarz inequality,
\begin{align}
     \Big( \sum_{\substack{\xi \in \mathcal{A}_{a,b} }} f_1(\xi)f_3(a-\xi) \Big)^2 &\lesssim \sum_{{\beta} \in \mathcal{A}_{a,b}^\perp}  \Big(\sum_{\substack{\xi\in [-N,N]^3,  \xi^\perp = {\beta}}} f_1(\xi)f_3(a-\xi) \Big)^2
     \\&\leq \sum_{{\beta} \in \mathcal{A}_{a,b}^\perp } {\Big( \sum_{\xi^\perp={\beta}} |f_1(\xi)|^2 \Big)} {\Big( \sum_{\xi^\perp = a^\perp - {\beta}} |f_3(\xi)|^2\Big)}.
\end{align}
Therefore,
\begin{align}
   & \mathrel{\phantom{\lesssim}} \sum_{\frac a2\in H_1\cap H_j} \sum_{b} \Big( \sum_{\substack{\xi \in \mathcal{A}_{a,b} }} f_1(\xi)f_3(a-\xi) \Big)^2 \\
    &\lesssim \sum_{\frac a2\in H_1\cap H_j} \sum_{b} \sum_{{\beta} \in \mathcal{A}_{a,b}^\perp } {\Big( \sum_{\xi^\perp={\beta}} |f_1(\xi)|^2 \Big)} {\Big( \sum_{\xi^\perp = a^\perp - {\beta}} |f_3(\xi)|^2\Big)}\\&
    =  \sum_{\frac a2\in H_1\cap H_j}  \sum_{{\beta}} {\Big( \sum_{\xi^\perp={\beta}} |f_1(\xi)|^2 \Big)} {\Big( \sum_{\xi^\perp = a^\perp - {\beta}} |f_3(\xi)|^2\Big)} \\&
    =  \| f_1\|_{\ell^2(\mathbb Z^3)}^2 \| f_3 \|_{\ell^2(\mathbb Z^3)}^2.
\end{align}

\textbullet\  If $\dim\operatorname{span}_\mathbb{R}\{ n_1,n_2,n_3,n_4\}=1$. In this case $n_1=n_2=n_3=n_4$ and $H_1=H_2=H_3=H_4$, otherwise the intersection is empty. Suppose ${(\xi_1,\xi_2,\xi_3,\xi_4)} \in \mathcal{Q}$ with $\xi_1-\xi_2$ is not a multiple of $An_1$, we have ${(\xi_1-\xi_2)\cdot A(\xi_1-\xi_4)=0}$ and $An_1\cdot A(\xi_1-\xi_4)=0$ since $n_1$ is the normal vector of $H_1$, which implies $A(\xi_1-\xi_4)$ is also a normal vector of $H_1$ and hence $A(\xi_1-\xi_4)$ is a multiple of $n_1$ and $\xi_1-\xi_4\in \cone$. As a result, we must have ${(\xi_1,\xi_2,\xi_3,\xi_4)}\in\mathcal{Q}_2$ and $\Omega_1{(f_1,f_2,f_3,f_4)}=0$.
    
\end{proof}

\begin{proof}[Proof of the claim]
   
    Denote  $
\wp_{q,\omega} = \{ (y,z)\in \mathbb Z^2 \mid z^2= qy+\omega \} \cap [-N,N]^2$ for fixed $N,q\in\mathbb Z_+$ and $\omega \in \mathbb Z$.
 Suppose $q = p_1^{\alpha_1}\dots p_r^{\alpha_r}$ is the prime factorization, we denote $\theta_i(z)$ the minimal residue of $z\pmod {p_i^{\alpha_i}}$ for $1\leq i\leq r$ and $\theta_0(z) = \lfloor z/q\rfloor$. Then the map
 \begin{equation}
     \mathbb Z \to \mathbb Z^{1+r},\quad z\mapsto (\theta_0(z),\theta_1(z),\dots,\theta_r(z))
 \end{equation}
 is an injection. In fact, if $\theta_i(z) = \theta_i(\tilde z )$ for all $1\leq i\leq r$, then $z-\tilde z$ is divided by all $p_i^{\alpha_i}$ and hence $z -\tilde z$ is divided by $q$. But $\theta_0(z)=\theta_0(\tilde z)$ implies $0\leq |z-\tilde z|\leq q-1$, which forces that $z=\tilde z$. As a result,
 \begin{equation}
     \# 
\wp_{q,\omega} \leq \prod_{i=0}^r \#\{ \theta_i(z) \mid (y,z) \in 
\wp_{q,\omega} \}.
 \end{equation}
Without loss of generality, we only consider the case $z>0$ and fix some $(y_0,z_0) \in 
\wp_{q,\omega}$.

We note that
$ \omega-qN \leq z^2 \leq \omega+qN$ for all $(y,z) \in 
\wp_{q,\omega}$.
If $\omega>2qN$, then $|z|,|z_0|>\sqrt{\omega/2}$ and hence
\[ \left|\theta_0(z) - \theta_0(z_0) \right| \leq 1+\frac{ |z^2- z_0^2| }{ q |z+z_0| }= 1+\frac{|y-y_0|}{|z+z_0|} \lesssim 1+ \frac{N}{\sqrt \omega} \lesssim 1+ \sqrt {N/q}. \]
If $\omega \leq 2qN$, then $|z| \lesssim \sqrt{qN}$ and hence $|\theta_0(z)| \lesssim \sqrt{ N/q}$.
Therefore, we know  $\theta_0(z)$ belongs to some interval of length $O(\sqrt{N/q}+1)$ for all $(y,z) \in 
\wp_{q,\omega}$.

On the other hand,
for any $(y,z) \in 
\wp_{q,\omega}$ we have $q|(z^2-z_0^2)$, and hence
 $p_i^{\alpha_i} | \theta_i(z-z_0)\theta_i(z+z_0)$ for $1\leq i\leq r$. Consequently, we can find some $\gamma_i\in\mathbb N$ which depends on $z$, such that $p^{\gamma_i}_i | \theta_i (z-z_0)$ and $p_i^{\alpha_i-\gamma_i} | \theta_i(z+z_0)$. This further implies
\begin{equation}
    \theta_i(z) \in \big( \theta_i(z_0) + p_i^{\gamma_i} \mathbb Z\big) \cap \big(-\theta_i(z_0)+p_i^{\alpha_i-\gamma_i}\mathbb{Z}\big).
\end{equation}
Let us put $\tilde\gamma_i =\max_{z} \min\{\gamma_i, \alpha_i-\gamma_i\}$. Notice $p_i^{\tilde\gamma_i} | 2\theta_i(z_0)$ and $p_i^{\max\{\gamma_i, \alpha_i-\gamma_i\}} \mathbb{Z}\subset p_i^{\alpha_i-\tilde\gamma_i}\mathbb{Z}$
, we see that at least one of 
\begin{equation}
    \theta_i(z)\in \big( \theta_i(z_0) + p_i^{\alpha_i-\tilde\gamma_i} \mathbb Z\big) ,\qquad \theta_i(z)\in  \big(-\theta_i(z_0)+p_i^{\alpha_i-\tilde\gamma_i}\mathbb{Z}\big) 
\end{equation}
holds true. Thus
\begin{equation}
    \{ \theta_i(z) \mid (y,z) \in 
\wp_{q,\omega} \} \subset \big( \theta_i(z_0) + p_i^{\alpha_i-\tilde\gamma_i} \mathbb Z\big) \cup \big(-\theta_i(z_0)+p_i^{\alpha_i-\tilde\gamma_i}\mathbb{Z}\big),
\end{equation}
and 
\begin{align}
    \#\{ \theta_i(z) \mid (y,z) \in 
\wp_{q,\omega} \}
    &\leq
    \begin{cases}
    2 p_i^{\tilde\gamma_i} , & \tilde\gamma_i\neq \alpha_i/2, \\
    p_i^{\alpha_i/2}, & \tilde\gamma_i=\alpha_i/2,
    \end{cases}
    \\
    &\leq p_i^{\alpha_i/2} \max \{1, 2p_i^{-1/2}\} .
\end{align}
 As a result $\#
\wp_{q,\omega}
 \lesssim \sqrt N+\sqrt{\vphantom{N} q}$.
\end{proof}

\subsection{Sharpness of Strichartz estimate}\label{subsec:examples}
We now present several examples showing the sharpness of \eqref{main-est}. 

Example 1: We take
\begin{equation}
    \phi_0(x) =  N^{-3/2} \sum_{\xi\in \mathbb{Z}^3 \cap [-N,N]^3} \me^{2\pi\mi \xi\cdot x}.
\end{equation}
It's easy to calculate that
$\| \phi_0\|_{{L^2_x(\mathbb T^3)}  }  \approx 1$,
while
 $   | \me^{\mi t\Box} \phi_0(x) | \gtrsim N^{3/2}$  for $ |x| < 1/{(100N)} $ and $ 0<t<1/{(100 N^2)}$.
As a consequence,
\begin{equation}
    \| \me^{\mi t \Box} \phi_0 \|_{{L_{t,x}^p([0,1]\times \mathbb T^3)}} \geq \left| \int_{\substack{|x|<\frac{1}{100 N},0<t<\frac{1}{100 N^2}}} | \me^{\mi t\Box} \phi_0|^p \dif t\dif x \right|^{1/p} \gtrsim N^{\frac 32 - \frac{5}{p} }.
\end{equation}
This example shows estimate~\eqref{main-est} is sharp for $p\geq 4.$

In particular for $p=4$, 
we consider $\Omega_2(\hat\phi_0)$, for each $\xi_1$, the number of choices of $\xi_2$ such that $\xi_1-\xi_2\in\cone$ is bounded by
\begin{align}
\#(\cone\cap[-N,N]^3) &\leq \sum_{M \leq N\text{ dyadic}} \#(\cone_M\setminus \cone_{M/2}) \\& \lesssim \sum_{M\leq N\text{ dyadic}} \frac{N}{M} \#(\cone_{M}^{\mathrm{irr}} \setminus \cone_{M/2}^{\mathrm{irr}}) \lesssim N\log N,
\end{align}
and $(\xi_3,\xi_4)$ lies on a plane passing through $\xi_1$ with normal vector $A(\xi_1-\xi_2)$, which gives $O(N^2)$ choices, hence
\begin{equation}
    \Omega_2(\hat\phi_0) \lesssim \log N .
\end{equation}
Thus $\Omega_1(\hat\phi_0)$ will give the major contribution in the $L^4$ estimate.

Example 2: We take
\begin{equation}
    \phi_0(x) = N^{-1/2}\sum_{\xi =1}^N \me^{2\pi\mi \xi x\cdot(1,1,0)}
\end{equation}
it's not hard to see
$    \| \phi_0\|_{{L^2_x(\mathbb T^3)}} \approx 1$.
Note that $\phi_0$ is invariant under the group $\{ \me^{\mi t\Box}\}_{t\in\mathbb R}$, hence
\begin{equation}
    |\me^{\mi t\Box} \phi_0(x)| = |\phi_0(x)|\gtrsim N^{1/2} \text{ for } |x \cdot (1,1,0)|<\frac{1}{100N},
\end{equation}
which implies
\begin{equation}
    \| \me^{\mi t\Box} \phi_0 \|_{{L_{t,x}^p([0,1]\times \mathbb T^3)} } \gtrsim N^{\frac12-\frac1p}.
\end{equation}
This example shows the estimate \eqref{main-est} is sharp for $p\in [2,4]$.  
Also for $p=4$, we notice $\Omega_1(\hat\phi_0)=0$, so $\Omega_2(\hat\phi_0)$ gives the major contribution.

Now if  we set $S=\operatorname{supp} \hat{\phi}_0$,  which is of size $N$, then we get 
\[ \| \me^{\mi t\Box} \phi_0 \|_{{L_{t,x}^4([0,1]\times \mathbb T^3)} }/\|\phi_0\|_{{{L^2_x(\mathbb T^3)}}}\gtrsim (\# S)^\frac14.\]
Consequently, for the $L^4$  estimate, we cannot obtain a non-trivial bound involving only $\#S$ without resorting to the trivial relation $\diam(S)\leq \#S$ (see Remark~\ref{rem:main-2}).

Example 3: We take
\begin{equation}
    \phi_{0,1}(y) = N^{-1/2} \sum_{\xi=1}^N \me^{2\pi\mi \xi y},\quad y\in \mathbb R
\end{equation}
and
\begin{equation}
    \phi_0(x) = N^{-1} \sum_{\xi,\eta=1}^N \me^{2\pi\mi (\xi,\xi,\eta)\cdot x} = \phi_{0,1}(x_1+x_2) \phi_{0,1}(x_3).
\end{equation}
Thus we see
 $   \| \phi_0 \|_{{L^2_x(\mathbb T^3)}} \approx 1$.
   
On the other hand, we have
\begin{align}
    | \me^{\mi t \Box}\phi_0(x)| &= |\phi_{0,1}(x_1+x_2) | |\me^{\mi t \partial^2}\phi_{0,1}(x_3)| \\& \gtrsim N^{1/2} |\me^{\mi{t}\partial^2} \phi_{0,1}(x_3)| 
\end{align}
for $|x_1+x_2|<1/(100N)$,
therefore
\begin{align}
    \| \me^{\mi t \Box} \phi_0 \|_{{L_{t,x}^p([0,1]\times \mathbb T^3)}} &\gtrsim N^{\frac12-\frac1p} \| \me^{\mi t\partial^2} \phi_{0,1} \|_{{L_{t,x}^p([0,1]\times \mathbb T^{})}}\\& \gtrsim N^{\frac12-\frac1p} \| \me^{\mi t \partial^2}{ \phi_{0,1}} \|_{{L_{t,x}^2([0,1]\times \mathbb T^{})}}\\& \approx N^{\frac12-\frac1p}.
\end{align}
This example also shows \eqref{main-est} is sharp for $p\in[2,4].$  We also observe for $p=4$, $\Omega_1(\hat\phi_0)=0$.

\begin{remark}
 We notice that the examples should easily generalize to the higher dimensional case. Example 1 also extends to irrational tori, while the validity of examples 2 and 3 on irrational tori depends on the equation.

\end{remark}

\section{Local Well-posedness}\label{Section: Local Well-posedness}

\subsection{Function spaces}
We use the adapted function spaces $X^s,Y^s$, whose definitions are based on the $U^p,V^p$ spaces.  We will give their definitions and state the basic properties. We refer the readers to \cite{hadac2009,herr2011} for detailed proofs of the following propositions, where a general theory can also be found.

Let $\mathcal H$ be a separable Hilbert space over $\mathbb C$; in this paper, this will be $\mathbb C$ or $H^s(\mathbb T^3)$.
Let $\mathcal Z$ be the set of finite partitions $-\infty <t_0 <t_1 < \cdots < t_K \leq \infty$ of the real line. 
\begin{defi} 
Let $1 \leq p < \infty$. For $\{ t_k \}_{k=0}^K \in\mathcal Z$ and $ \{ \phi_k \}_{k=0}^{K-1} \subset \mathcal H$ with $\sum_{k=1}^{K-1} \| \phi_k \|_{\mathcal H}^p =1$, we call a piecewise defined function $a:\mathbb R \to \mathcal H$,
\[ a(t) = \sum_{k=1}^{K-1} \chi_{[t_k,t_{k+1})} \phi_k \]
a $U^p$-atom, and we define the atomic space $U^p(\mathbb R, \mathcal H)$ of all functions $u \colon \mathbb R\to\mathcal H$ such that
\[ u = \sum_j \lambda_j a_j ,\quad \mbox{ with } a_j \mbox{ are } U^p\mbox{-atoms, and  } \{\lambda_j\} \in \ell^1,\]
with norm
\[
 \| u\|_{U^p(\mathbb R,\mathcal H)} := \inf \left\{ \sum_j |\lambda_j|  \Biggm| u = \sum_j \lambda_j a_j, \  a_j \text{ are } U^p\text{-atoms}\right\}.
\]
\end{defi}

\begin{defi}
Let $1\leq p<\infty$, we define the space $V^p(\mathbb R,\mathcal H)$ of functions $v \colon \mathbb R \to \mathcal H$ such that $\lim_{t\to-\infty} v(t)=0$ and the norm
\[
 \| v \|_{V^p(\mathbb R, \mathcal H)} := \sup_{\{t_k\}_{k=0}^K \in\mathcal Z} \left( \sum_{k=0}^{K-1} \| v(t_{k+1}) - v(t_k) \|_{\mathcal{H}}^p \right)^{1/p}
\]
is finite.
\end{defi}

Corresponding to the linear  flow generated by the group $\{\me^{\mi t\Box}\}_{t\in\mathbb R}$, we define the following.

\begin{defi}
For $s\in \mathbb R$, we define the space $U^p_\Box H^s$ $(\mbox{resp., } V^p_\Box H^s)$ of functions $u \colon \mathbb R \to H^s(\mathbb T^3)$ such that $ t \mapsto \me^{-\mi t\Box} u(t)$ is in $U^p(\mathbb R, H^s(\mathbb T^3))$ $(\mbox{resp., } V^p(\mathbb R, H^s(\mathbb T^3)))$ with the norms
\[ \| u \|_{U^p_\Box H^s} := \|  \me^{-\mi t\Box} u  \|_{U^p(\mathbb R, H^s(\mathbb T^3))} ,\quad  \| u\|_{V^p_\Box H^s} := \| \me^{-\mi t\Box} u \|_{V^p(\mathbb R, H^s(\mathbb T^3))}  . \]
\end{defi}
Due to the atomic structure of $U^p$, we can extend bounded operators on $L^2(\mathbb T^3)$ to $U^p_\Box L^2$.
\begin{prop} [{\cite[Proposition 2.19]{hadac2009}}] \label{extension}
Let $1\leq p< \infty$ and $T_0 \colon L^2(\mathbb T^3) \times \cdots \times L^2(\mathbb T^3) \to L_{\mathrm{loc}}^1 ( \mathbb R \times \mathbb T^3)$ be a $n$-linear operator. If 
\[ \| T_0( \me^{\mi t\Box} \phi_1, \cdots , \me^{\mi t\Box} \phi_n) \|_{L^p_{t,x}} \leq C_{T_0} \prod_{i=1}^n \| \phi_i \|_{{L^2_x(\mathbb T^3)}}, \]
then $T_0$ extends to a $n$-linear operator $T$ on $U^p_\Box L^2 \times \cdots \times U_\Box^pL^2$, satisfying
\[ \| T ( u_1, \cdots , u_n) \|_{L^p_{t,x}} \lesssim C_{T_0} \prod_{i=1}^n \| u_i \|_{U^p_\Box L^2}. \]
\end{prop}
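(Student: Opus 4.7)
The strategy is standard in the $U^p$-theory of Koch--Tataru and Hadac--Herr--Koch: verify the claim first on atoms, then extend by linearity using the atomic decomposition of $U^p$. The plan proceeds in two main steps.

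First I would establish the bound when each input is of the form $u_i = \me^{\mi t\Box} a^{(i)}$ with $a^{(i)}$ a single $U^p$-atom, say $a^{(i)}(t) = \sum_{k_i} \chi_{I_{k_i}^{(i)}} \phi_{k_i}^{(i)}$ normalized so that $\sum_{k_i} \|\phi_{k_i}^{(i)}\|_{L^2}^p = 1$. By multilinearity, on each product cell $\bigcap_i I_{k_i}^{(i)}$ the function $T(u_1,\ldots,u_n)$ agrees with $T_0(\me^{\mi t\Box}\phi_{k_1}^{(1)},\ldots,\me^{\mi t\Box}\phi_{k_n}^{(n)})$. Since these cells are pairwise disjoint in $t$ as the multi-index $(k_1,\ldots,k_n)$ varies, computing the $L^p_{t,x}$ norm and enlarging each time-integration to all of $\mathbb R$ yields
\[
\|T(u_1,\ldots,u_n)\|_{L^p_{t,x}}^p \leq \sum_{k_1,\ldots,k_n} \|T_0(\me^{\mi t\Box}\phi_{k_1}^{(1)},\ldots,\me^{\mi t\Box}\phi_{k_n}^{(n)})\|_{L^p_{t,x}}^p.
\]
Applying the hypothesis term by term and using the atom normalization, the right side factorizes into $C_{T_0}^p \prod_i \sum_{k_i} \|\phi_{k_i}^{(i)}\|_{L^2}^p = C_{T_0}^p$.

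Second, I would pass to arbitrary $u_i \in U^p_\Box L^2$. For any $\varepsilon > 0$ choose atomic decompositions $\me^{-\mi t\Box} u_i = \sum_{j_i} \lambda_{j_i}^{(i)} a_{j_i}^{(i)}$ with $\sum_{j_i} |\lambda_{j_i}^{(i)}| \leq \|u_i\|_{U^p_\Box L^2} + \varepsilon$. Defining $T$ on such sums by multilinear expansion and applying the atomic estimate from Step 1 to each term,
\[
\|T(u_1,\ldots,u_n)\|_{L^p_{t,x}} \leq \sum_{j_1,\ldots,j_n} \Bigl(\prod_i |\lambda_{j_i}^{(i)}|\Bigr) C_{T_0} \leq C_{T_0} \prod_i \bigl(\|u_i\|_{U^p_\Box L^2} + \varepsilon\bigr),
\]
and letting $\varepsilon \to 0$ concludes the argument.

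The only delicate point is making sense of the extension on general inputs: one must check that the multilinear expansion above is independent of the chosen atomic decompositions. This follows from the density of atomic sums in $U^p$, the absolute $\ell^1$-summability of the coefficients, and the uniform atomic bound established in Step 1, which together guarantee absolute convergence in $L^p_{t,x}$. I expect this to be the only minor obstacle; everything else is routine bookkeeping in the $U^p/V^p$ framework.
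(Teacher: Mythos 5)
Your proof is correct and is precisely the standard argument for this transfer principle: verify the bound on products of free evolutions of $U^p$-atoms using the disjointness of the product time-cells and the normalization $\sum_k\|\phi_k^{(i)}\|_{L^2}^p=1$, then sum the atomic decompositions. The paper does not prove this proposition itself but cites \cite[Proposition~2.19]{hadac2009}, whose proof is the same atomic argument you give, so there is nothing to compare beyond noting the match.
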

The following corollary is a direct application of this proposition to our main result Theorem \ref{sharp T3 L4 hyperbolic Strichartz} and Remark~\ref{rem:main-2}.
\begin{coro}
For $u\in U^4_\Box L^2$, and any cube $C$ of side length $N$, we have 
\[ \| P_{C} u \|_{L^4_{t,x}([0,1]\times \mathbb T^3)} \lesssim N^{1/4} \| u \|_{U^4_\Box L^2}. \label{Strichartz estimate in U^4_Box}\]
\end{coro}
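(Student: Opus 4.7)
The plan is to combine the Galilean-invariant reformulation of the sharp $L^4$ Strichartz estimate from Remark~\ref{rem:main-2} with the abstract atomic transference principle of Proposition~\ref{extension}.

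First, I would observe that a cube $C\subset\mathbb{Z}^3$ of side length $N$ satisfies $\diam(C)\lesssim N$. Applying the Galilean-invariant version of the main estimate in Remark~\ref{rem:main-2} to the bounded set $S=C$ then gives the linear bound
\[ \| P_C \me^{\mi t\Box}\phi\|_{L^4_{t,x}([0,1]\times\mathbb T^3)} \lesssim N^{1/4}\|\phi\|_{L^2_x(\mathbb T^3)}, \quad \phi\in L^2(\mathbb T^3). \]

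Next, I would regard the Fourier multiplier $P_C$ as a (time-independent) linear operator $T_0\colon L^2(\mathbb T^3)\to L^1_{\mathrm{loc}}(\mathbb R\times\mathbb T^3)$. The displayed inequality is then precisely the hypothesis of Proposition~\ref{extension} in the case $n=1$, $p=4$, with $C_{T_0}\lesssim N^{1/4}$. The conclusion of that proposition automatically upgrades the estimate to
\[ \|P_C u\|_{L^4_{t,x}(\mathbb R\times\mathbb T^3)}\lesssim N^{1/4}\|u\|_{U^4_\Box L^2} \]
for every $u\in U^4_\Box L^2$, and restricting the temporal integration to $[0,1]$ only strengthens the left-hand side, giving the desired conclusion.

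I expect essentially no obstacle here, since both ingredients are already in place: the nontrivial harmonic-analytic content sits in Theorem~\ref{sharp T3 L4 hyperbolic Strichartz} (together with its Galilean reformulation), while Proposition~\ref{extension} is a general abstract transference lemma for $U^p$-atoms whose proof is given in \cite{hadac2009}. The only point worth highlighting is that the Galilean-invariant version from Remark~\ref{rem:main-2} is essential here, because the cube $C$ need not be centered at the origin and its side length $N$ need not be dyadic in the Littlewood-Paley sense; once the bounded-set formulation is invoked, the rest reduces to a one-line application of the transference principle.
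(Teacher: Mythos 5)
Your proposal is correct and matches the paper's intended argument exactly: the paper states the corollary is "a direct application" of Proposition~\ref{extension} to Theorem~\ref{sharp T3 L4 hyperbolic Strichartz} together with Remark~\ref{rem:main-2}, which is precisely the combination you use (the bounded-set formulation with $S=C$, $\diam(C)\lesssim N$, followed by the $U^p$-transference principle with $n=1$, $p=4$). Your observation that the Galilean-invariant version is what handles non-centered cubes is exactly the reason the paper invokes Remark~\ref{rem:main-2} rather than the dyadic annulus estimate.
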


\begin{defi}
For $s\in\mathbb R$, we define the space $X^s$ of functions $u\colon \mathbb R \to H^s(\mathbb T^3)$ such that for every $\xi \in \mathbb Z^3$ the mapping $ t \mapsto \me^{ -\mi t h(\xi)} \widehat{u(t)}(\xi)$ is in $U^2(\mathbb R, \mathbb C ) $, with the norm
\[ \|u\|_{X^s} := \left( \sum_{\xi \in \mathbb Z^3} \left<\xi\right>^{2s} \| \me^{ -\mi t h(\xi)} \widehat{u(t)}(\xi) \|_{U^2(\mathbb R, \mathbb C)}^2 \right)^{1/2} .\]
\end{defi}
\begin{defi}
For $s\in\mathbb R$, we define the space $Y^s$ of functions $u\colon \mathbb R \to H^s(\mathbb T^3)$ such that for every $\xi\in \mathbb Z^3$ the mapping $ t \mapsto \me^{ -\mi t h(\xi)} \widehat{u(t)}(\xi)$ is in $V^2(\mathbb R,  \mathbb C ) $, with the norm
\[ \|u\|_{Y^s} := \left( \sum_{\xi\in \mathbb Z^3} \left<\xi\right>^{2s} \| \me^{ -\mi t h(\xi)} \widehat{u(t)}(\xi) \|_{V^2(\mathbb R, \mathbb C)}^2 \right)^{1/2} .\]
\end{defi}

\begin{remark}\label{rem:embedding}
We have the embeddings\[U^2_\Box H^s \hookrightarrow X^s \hookrightarrow Y^s \hookrightarrow V^2_\Box H^s \hookrightarrow U^q_\Box H^s\hookrightarrow L^\infty H^s,\quad \forall q\in(2,\infty).\]
\end{remark}
\begin{remark}\label{remark}
For $s\in \mathbb R$, and $S_1,S_2$ are disjoint subsets of $\mathbb Z^3$, we have
\[ \| P_{S_1\cup S_2} u\|_{Y^s} ^2 = \| P_{S_1} u\|_{Y^s}^2 + \|P_{S_2} u\|_{Y^s}^2. \]
\end{remark}
For time interval $I\subset \mathbb R$, we also consider the restriction spaces $X^s(I),Y^s(I)$ with norms
\[
\| u \|_{X^s(I)} = \inf \{ \| \tilde{u} \|_{X^s} \mid \tilde{u}|_{I}=u \},\quad
\| u \|_{Y^s(I)} = \inf \{ \| \tilde{u} \|_{Y^s} \mid \tilde{u}|_{I}=u \}.
\]
\begin{prop}[{\cite[Proposition 2.10]{herr2011}}]
Let $s\in \mathbb{R}$ and $T>0$. For $\phi \in H^s(\mathbb T^3)$, we have $\me^{\mi t\Box} \phi \in X^s([0,T))$ and
\[ \| \me^{\mi t\Box} \phi \|_{X^s([0,T))} \leq \| \phi \|_{H^s(\mathbb T^3)} .\]
For $f\in L^1 ([0,T); H^s(\mathbb T^3))$, we have  the estimate for the Duhamel term.
\begin{multline*}
\left\| \int_0^t \me^{\mi (t-t')\Box} f(t') \dif t' \right\|_{X^s([0,T))} \leq 
\sup_{\substack{v\in Y^{-s}([0,T)) \\ \|v\|_{Y^{-s}([0,T))}\leq 1}  } \left| \iint_{[0,T)\times \mathbb T^3} f(t,x) \overline{v(t,x)} \dif x\dif t \right|
\end{multline*}
\end{prop}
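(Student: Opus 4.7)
The homogeneous estimate is immediate from the definition of $X^{s}$. For each $\xi\in\mathbb Z^{3}$, the function $t\mapsto e^{-ith(\xi)}\widehat{e^{it\Box}\phi}(\xi)$ equals the constant $\hat\phi(\xi)$, so (viewed as a single step-function atom on $(-\infty,\infty)$) it lies in $U^{2}(\mathbb R,\mathbb C)$ with norm exactly $|\hat\phi(\xi)|$. Multiplying by $\langle\xi\rangle^{2s}$ and summing recovers $\|\phi\|_{H^{s}}^{2}$. Restriction to $[0,T)$ preserves the inequality by the definition of the restriction norm, giving $\|e^{it\Box}\phi\|_{X^{s}([0,T))}\le\|\phi\|_{H^{s}}$.

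The Duhamel estimate rests on the $(U^{2})^{*}\!\simeq\! V^{2}$ duality of Hadac-Herr-Koch \cite{hadac2009}, carried termwise in the Fourier variable $\xi$ through the linear flow and summed against the $H^{s}$ weights. This yields the functional duality
\[ \|u\|_{X^{s}([0,T))} \approx \sup_{\substack{v\in Y^{-s}([0,T))\\ \|v\|_{Y^{-s}}\le 1}}\Bigl|\iint_{[0,T)\times\mathbb T^{3}} u(t,x)\,\overline{v(t,x)}\,dx\,dt\Bigr|, \]
where Plancherel on $\mathbb T^{3}$ is used to identify the $\ell^{2}_{\xi}$ pairing with a spatio-temporal pairing. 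Set $U(t):=\int_{0}^{t} e^{i(t-t')\Box}f(t')\,dt'$; the key observation is that its profile $e^{-it\Box}U(t)=\int_{0}^{t} e^{-it'\Box}f(t')\,dt'$ is simply the antiderivative, on $[0,T)$, of $t'\mapsto e^{-it'\Box}f(t')$.

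Plugging $U$ into the duality pairing and interchanging the order of integration, one computes
\[ \iint U(t,x)\,\overline{v(t,x)}\,dx\,dt =\iint_{0\le t'\le t<T}\bigl\langle e^{i(t-t')\Box}f(t'),v(t)\bigr\rangle_{L^{2}_{x}}\,dt\,dt', \]
and then uses the fact that pairing $V^{2}$ test functions against an antiderivative dualizes—by a time-discrete Abel summation reflecting the very construction of the $U^{2}$--$V^{2}$ pairing—to a direct pairing with $f$ itself: the inner integrand is rewritten as $\langle f(t'), \tilde v(t')\rangle_{L^{2}_{x}}$ where $\tilde v(t')=\int_{t'}^{T} e^{-i(t-t')\Box}v(t)\,dt$ is controlled by $v$ in $Y^{-s}$ (this is where stability of the $Y^{-s}$ norm under the flow $e^{-it\Box}$ is used, which is built into its definition). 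Taking the supremum over the unit ball of $Y^{-s}$ delivers the stated bound.

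The main technical point is the careful realization of the $U^{2}$--$V^{2}$ duality at the level of the $(X^{s},Y^{-s})$ pair: namely, that termwise duality in $\xi$ combines with the $\ell^{2}_{\xi}$ pairing from the Hilbertian structure of $H^{s}$ and with Plancherel on $\mathbb T^{3}$ to produce a single scalar-valued spatio-temporal pairing. Since this is precisely the content of the $U^{p}/V^{p}$ machinery developed in \cite{hadac2009} and adapted to dispersive equations in \cite{herr2011}, I would cite it rather than reprove the foundations; the remaining manipulations for the Duhamel term then amount to a standard Fubini/integration-by-parts in time.
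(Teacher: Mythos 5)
The paper does not prove this proposition; it is quoted verbatim from Herr--Tataru--Tzvetkov (the cited Proposition 2.10), whose proof rests on the Hadac--Herr--Koch $U^2$--$V^2$ duality. Your treatment of the homogeneous part is correct: the profile of $\me^{\mi t\Box}\phi$ is constant in $t$, and (after cutting off to $[0,\infty)$ to produce a one-step atom extending the restriction to $[0,T)$) each Fourier mode contributes exactly $|\hat\phi(\xi)|$ to the $U^2$ norm.

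The Duhamel part, however, contains a genuine error. The displayed ``functional duality''
$\|u\|_{X^{s}([0,T))} \approx \sup_{\|v\|_{Y^{-s}}\le 1}\bigl|\iint u\,\overline{v}\,\dif x\dif t\bigr|$
is false: for a free solution $u=\me^{\mi t\Box}\phi$ one has $\|u\|_{X^s([0,T))}=\|\phi\|_{H^s}$, while
$\iint_{[0,T)\times\mathbb{T}^3} u\,\overline v\,\dif x\dif t=\int_0^T\langle\phi,\me^{-\mi t\Box}v(t)\rangle\,\dif t$ is bounded by $CT\|\phi\|_{H^s}\|v\|_{Y^{-s}}$, so the supremum degenerates as $T\to0$ and cannot be comparable to the norm. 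The actual $U^2$--$V^2$ duality pairs $v$ against the \emph{time derivative of the profile} of $u$, i.e.\ $\|F\|_{U^2}=\sup_{\|v\|_{V^2}\le1}|\int\langle \partial_t F,v\rangle\,\dif t|$ for absolutely continuous $F$, not against $u$ itself in $L^2_{t,x}$. You do state the right key fact in words (the profile of the Duhamel term is the antiderivative of $t'\mapsto \me^{-\mi t'\Box}f(t')$), and with the correct duality this finishes in one line: $\partial_t F=\me^{-\mi t\Box}f(t)$, and $\langle \me^{-\mi t\Box}f(t),\me^{-\mi t\Box}v(t)\rangle_{L^2_x}=\int f\,\overline v\,\dif x$ by unitarity, which is exactly the stated bound. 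Your subsequent Fubini computation producing $\tilde v(t')=\int_{t'}^{T}\me^{-\mi(t-t')\Box}v(t)\,\dif t$ therefore bounds the wrong quantity ($|\iint U\overline v|$ rather than $\|U\|_{X^s}$) and should be discarded; had it been the right quantity, the extra factor of $T$ it produces would already signal that something is inconsistent.
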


\begin{remark}
The $X^s([0,T))$ norm of the Duhamel term is also controlled by $ \| f \|_{L^1([0,T);H^s(\mathbb T^3))}$.
\end{remark}
\subsection{Multilinear estimates} We start from a bilinear estimate for frequency localized functions on $\mathbb T^3$.
\begin{prop}\label{Bilinear Strichartz estimate on T^3}
    For $u_1,u_2 \in Y^0([0,1])$ with $u_i = P_{N_i} u_i$, we have
    \begin{equation}
        \| u_1 u_2 \|_{{L_{t,x}^2([0,1]\times \mathbb T^3)}} \lesssim \min\{ N_1,N_2 \}^{1/2} \| u_1\|_{Y^0([0,1])} \|u_2\|_{Y^0([0,1])}.
    \end{equation}
\end{prop}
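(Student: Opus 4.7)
Without loss of generality assume $N_1 \leq N_2$, so our target is
\[
\|u_1 u_2\|_{L^2_{t,x}([0,1]\times \mathbb T^3)} \lesssim N_1^{1/2} \|u_1\|_{Y^0([0,1])} \|u_2\|_{Y^0([0,1])}.
\]
The plan is a standard almost-orthogonality argument powered by the $L^4$ Strichartz estimate of Theorem~\ref{sharp T3 L4 hyperbolic Strichartz}. Combining the corollary following Proposition~\ref{extension} (which applies, via Galilean invariance, to projections onto any cube) with the embedding $Y^0 \hookrightarrow U^4_\Box L^2$ from Remark~\ref{rem:embedding}, we have, for any cube $C\subset \mathbb Z^3$ of side length $N$,
\[
\|P_C u\|_{L^4_{t,x}([0,1]\times \mathbb T^3)} \lesssim N^{1/4} \|P_C u\|_{Y^0([0,1])}.
\]

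First I cover the dyadic shell supporting $\hat u_2$ by a pairwise-disjoint family $\{C\}$ of cubes of side $N_1$ and write $u_2 = \sum_C P_C u_2$. Since $\hat u_1$ lives in a shell of radius $\sim N_1$, the Fourier support of $u_1 \cdot P_C u_2$ is contained in a cube $\widetilde C$ of side $O(N_1)$ concentric with $C$, and these enlarged cubes are finitely overlapping. By Plancherel and Cauchy--Schwarz this yields
\[
\|u_1 u_2\|_{L^2_{t,x}}^2 \lesssim \sum_C \|u_1 \cdot P_C u_2\|_{L^2_{t,x}}^2.
\]
For each summand, Hölder's inequality followed by the displayed $L^4$ estimate applied to both factors gives
\[
\|u_1 \cdot P_C u_2\|_{L^2_{t,x}} \leq \|u_1\|_{L^4_{t,x}} \|P_C u_2\|_{L^4_{t,x}} \lesssim N_1^{1/2} \|u_1\|_{Y^0} \|P_C u_2\|_{Y^0}.
\]
Summing over $C$ and invoking the Pythagorean identity from Remark~\ref{remark}, i.e.\ $\sum_C \|P_C u_2\|_{Y^0}^2 = \|u_2\|_{Y^0}^2$, produces the desired bound $\|u_1 u_2\|_{L^2_{t,x}}^2 \lesssim N_1 \|u_1\|_{Y^0}^2 \|u_2\|_{Y^0}^2$.

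The only mildly delicate point is the bounded-overlap claim, which reduces to the elementary observation that if disjoint cubes $C,C'$ of side $N_1$ satisfy $(\{|\xi|\sim N_1\}+C)\cap (\{|\xi|\sim N_1\}+C')\neq \emptyset$, then $C$ and $C'$ lie within distance $O(N_1)$ of each other in $\mathbb Z^3$, whence each $C$ has only $O(1)$ such neighbors. The passage from $Y^0(\mathbb R)$ to $Y^0([0,1])$ is routine via near-optimal extensions of $u_1$ and $u_2$. Thus no new obstacle arises beyond the two ingredients already established in the paper: the sharp $L^4$ Strichartz bound and the norm equivalence for frequency decompositions in $Y^0$.
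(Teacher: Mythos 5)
Your proof is correct and follows essentially the same route as the paper: decompose $u_2$ into cubes of side $N_1$, use almost orthogonality of the Fourier supports of the products, apply H\"older together with the $L^4$ Strichartz bound (via the $U^4_\Box$ extension and the embedding $Y^0\hookrightarrow U^4_\Box L^2$), and conclude with the Pythagorean identity of Remark~\ref{remark}. The only difference is that you spell out the finite-overlap and restriction-space details that the paper leaves implicit.
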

\begin{proof}
    We may assume that $N_1\leq N_2$. We decompose $\mathbb Z^3 = \bigcup_j  C_j$ into almost disjoint cubes with side length $N_1$ and write
    \begin{equation}
        u_1u_2 = \sum_{C_j} u_1 P_{C_j} u_2.
    \end{equation}
    Their Fourier supports are finitely overlapped, hence we have the almost orthogonality
    \begin{align}
        \| u_1u_2\|_{{L_{t,x}^2([0,1]\times \mathbb T^3)}}^2 &\approx \sum_j \| u_1 P_{C_j} u_2\|_{{L_{t,x}^2([0,1]\times \mathbb T^3)}}^2 \\&
        \leq \sum_j \| u_1 \|_{{L_{t,x}^4([0,1]\times \mathbb T^3)}}^2 \| P_{C_j} u_2 \|_{{L_{t,x}^4([0,1]\times \mathbb T^3)}}^2. 
    \end{align}
    { By Corollary \ref{Strichartz estimate in U^4_Box}, Remark \ref{remark} and the embedding properties in  Remark \ref{rem:embedding},}
    \begin{align}
        \| u_1u_2\|_{{L_{t,x}^2([0,1]\times \mathbb T^3)}}^2 &\lesssim{} \sum_j N_1 \| u_1\|_{Y^0([0,1])}^2 \| P_{C_j} u_2 \|_{Y^0([0,1])}^2 \\&
        ={} N_1\| u_1\|_{Y^0([0,1])}^2 \|  u_2 \|_{Y^0([0,1])}^2.
    \end{align}
  
\end{proof}

Now we are ready to show the key estimate on the nonlinear term by using duality argument combined with frequency decomposition, which helps to treat the nonlinearity in the fixed point argument.

\begin{prop}\label{k-multilinear estimate on T^3}
    Let $k\in \mathbb{N}_+$, $s=\frac 32 - \frac 1k$ if $k\geq 2$ and $s>\frac12$ if $k=1$. Then for any $0<T<1$,
    for $u_1,\dots,u_{2k+1} \in {X^{s}([0,T))}$, we have
    \begin{equation}
        \left\| \int_0^t \me^{\mi (t-t')\Box} \prod_{i=1}^{2k+1} u_i \dif t' \right\|_{{X^{s}([0,T))}} \lesssim_{s,k} \prod_{i=1}^{2k+1} \| u_i\|_{{X^{s}([0,T))}}.
    \end{equation}
    Here the  implicit constant does not depend on T.
\end{prop}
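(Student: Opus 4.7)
The plan is to apply the duality characterization of the $X^s$-norm (stated in the proposition preceding our statement), which gives
\[\biggl\| \int_0^t \me^{\mi(t-t')\Box} \prod_{i=1}^{2k+1} u_i \,\dif t' \biggr\|_{X^{s}([0,T))} \lesssim \sup_{\|v\|_{Y^{-s}([0,T))}\leq 1} \biggl| \iint_{[0,T)\times \mathbb T^3} \prod_{i=1}^{2k+1} u_i \cdot \overline v \, \dif x \, \dif t \biggr|.\]
Thus the task reduces to proving a $(2k+2)$-linear spacetime estimate
\[|\mathcal{I}|:=\biggl|\iint_{[0,T)\times\mathbb{T}^3} \prod_{i=1}^{2k+1}u_i\cdot \overline v\,\dif x\,\dif t\biggr| \lesssim \prod_{i=1}^{2k+1}\|u_i\|_{X^{s}([0,T))}\|v\|_{Y^{-s}([0,T))}.\]

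Next I would dyadically decompose each factor, $u_i=\sum_{N_i}P_{N_i}u_i$ and $v=\sum_M P_Mv$, and by symmetry assume $N_1\geq N_2\geq \cdots \geq N_{2k+1}$; the Fourier support of the product forces $M\lesssim N_1$. On each dyadic block the central tool is Proposition~\ref{Bilinear Strichartz estimate on T^3} applied to the pair $(P_{N_1}u_1, P_Mv)$, yielding
\[\|P_{N_1}u_1 \cdot \overline{P_M v}\|_{L^2_{t,x}}\lesssim M^{1/2}\|P_{N_1}u_1\|_{Y^0}\|P_M v\|_{Y^0}.\]
The $M^{1/2}$ gain is crucial because it balances the $M^{-s}$ weight coming from $\|v\|_{Y^{-s}}$. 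For $k=1$ (cubic), a second bilinear estimate on $(P_{N_2}u_2, P_{N_3}u_3)$ extracts $N_3^{1/2}$, and Cauchy--Schwarz in time and space yields $|\mathcal{I}|\lesssim M^{1/2}N_3^{1/2}\prod_i\|P_{N_i}u_i\|_{Y^0}\|P_M v\|_{Y^0}$; the resulting dyadic sum is then controlled, using the $\ell^2$-summability built into the $X^s$ and $Y^{-s}$ norms, for any $s>1/2$.

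For $k\geq 2$, at the critical index $s_c=3/2-1/k\geq 1$, I would again use the $(P_{N_1}u_1, P_Mv)$ bilinear, combine it with a second bilinear pairing on $(P_{N_2}u_2, P_{N_{2k+1}}u_{2k+1})$ to extract $N_{2k+1}^{1/2}$, and bound the remaining $2k-2$ factors via Strichartz-type $L^p_{t,x}$ estimates obtained by interpolating Theorem~\ref{sharp T3 L4 hyperbolic Strichartz} with the trivial $L^2$ and $L^\infty$ bounds and transferring to $U^p_\Box L^2$ through Proposition~\ref{extension}, together with the embedding $Y^s\hookrightarrow L^\infty H^s$ from Remark~\ref{rem:embedding}. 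The Hölder exponents must be chosen so that, after replacing $Y^0$ norms by $X^{s_c}$-normalized quantities using the frequency weights $N_i^{-s_c}$ (and $M^{s_c}$ for $v$), the resulting multi-indexed dyadic sum converges via repeated Cauchy--Schwarz or a Schur-type argument.

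The main obstacle is orchestrating the convergence of the dyadic series at the critical regularity $s=s_c$ for $k\geq 2$, where no $\varepsilon$-loss can be tolerated. This is precisely where Theorem~\ref{sharp T3 L4 hyperbolic Strichartz} --- sharp and free of the $N^\varepsilon$-loss of Bourgain--Demeter --- becomes indispensable, since any polynomial or logarithmic loss in the bilinear estimate (which derives from the sharp Strichartz) would obstruct the endpoint summation. The delicate bookkeeping of frequency interactions, particularly between the highest frequency $N_1$ and the intermediate frequencies $N_3,\dots,N_{2k}$, constitutes the technical heart of the argument.
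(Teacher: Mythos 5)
Your overall strategy (duality, Littlewood--Paley decomposition, two applications of the bilinear estimate, control of the remaining factors, Cauchy--Schwarz in the dyadic parameters) matches the paper's, but your choice of bilinear pairings contains a genuine error. You pair the dual function $P_M v$ with the \emph{highest}-frequency input $P_{N_1}u_1$. Proposition~\ref{Bilinear Strichartz estimate on T^3} then gains $\min\{N_1,M\}^{1/2}\approx M^{1/2}$ (note $M\lesssim N_1$), while normalizing $\|P_Mv\|_{Y^0}\approx M^{s}\|P_Mv\|_{Y^{-s}}$ costs a factor $M^{+s}$ --- not $M^{-s}$ as you assert; the $Y^{-s}$ norm is weaker at frequency $M$, so passing to it enlarges the weight rather than balancing the gain. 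In the high-output regime $M\approx N_1\gg N_2\geq\cdots\geq N_{2k+1}$ (which is admissible: take $\xi_0\approx\xi_1$ and all other inputs at frequency $O(1)$), your block estimate therefore carries the uncompensated factor $M^{1/2+s}N_1^{-s}\approx N_1^{1/2}$. The second pairing and the remaining factors involve only $N_2,\dots,N_{2k+1}$ and cannot absorb a loss growing in $N_1$, so the dyadic sum diverges no matter how the Cauchy--Schwarz bookkeeping is arranged.

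The fix, and what the paper does, is to pair the dual function with a \emph{sub-leading} input: estimate $\|u^{(0)}_{N_0}u^{(2)}_{N_2}\|_{L^2}\,\|u^{(1)}_{N_1}u^{(3)}_{N_3}\|_{L^2}$, gaining $\min\{N_0,N_2\}^{1/2}N_3^{1/2}$, so that each gained half-power sits next to a weight $N_2^{-s}$ or $N_3^{-s}$ with $s>1/2$, while the bounded ratio $(N_0/N_1)^{s}$ along the forced diagonal $N_1\approx\max\{N_0,N_2\}$ is summed by Cauchy--Schwarz over the two comparable highest frequencies. For $k\geq2$ the paper then simply places the factors $i\geq4$ in $L^\infty_{t,x}$ via Bernstein, paying $N_i^{3/2-s}=N_i^{1/k}$; summing these over $N_i\leq N_3$ converts $N_3^{1/2-s}$ into $N_3^{s-1/2}\leq N_2^{s-1/2}$, which is exactly what makes the critical exponent $s=\tfrac32-\tfrac1k$ work. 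Your proposed interpolated $L^p$ Strichartz bounds for those middle factors are an avoidable complication and, in any case, do not repair the pairing problem above.
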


\begin{proof}
    It suffices to show that for any $u_0 \in Y^{-s}([0,T))$, we have
    \begin{equation}
        \left| \int_{[0,T)\times \mathbb T^3} u_0\prod_{i=1}^{2k+1}u_i \dif x\dif t \right| \lesssim \| u_0\|_{Y^{-s}([0,T))} \prod_{i=1}^{2k+1} \| u_i \|_{X^{s}([0,T))}.
    \end{equation}
    We apply Littlewood-Paley decomposition to each $u_i$ to write
    \begin{equation}
        u_i = \sum_{N_i\text{ dyadic}} P_{N_i}u_i = \sum_{N_i\text{ dyadic}} u_{N_{i}}^{(i)},
    \end{equation}
    hence it suffices to estimate
    \begin{equation}
        \sum_{N_0,\dots,N_{2k+1}} \left| \int_{[0,T)\times \mathbb T^3} u_{N_{0}}^{(0)}\prod_{i=1}^{2k+1} u_{N_{i}}^{(i)} \dif x\dif t \right|.
    \end{equation}
    In order to make the integral non-zero, we must have that the two highest frequencies are comparable. Due to symmetry, it's harmless to assume $N_1\geq N_2\geq \dots\geq N_{2k+1}$. Following Proposition \ref{Bilinear Strichartz estimate on T^3} we have that
    \begin{align}
        &\left| \int_{[0,T)\times \mathbb T^3} u_{N_{0}}^{(0)}\prod_{i=1}^{2k+1} u_{N_{i}}^{(i)} \dif x\dif t \right| \\\leq&{} \| 
        u_{N_{0}}^{(0)}u_{N_{2}}^{(2)}\|_{{L_{t,x}^2([0,1]\times \mathbb T^3)}} \| u_{N_{1}}^{(1)} u_{N_{3}}^{(3)} \|_{{L_{t,x}^2([0,1]\times \mathbb T^3)}} \prod_{i\geq 4} \| u_{N_{i}}^{(i)} \|_{{L_{t,x}^\infty([0,1]\times \mathbb T^3)}} \\
        \lesssim&{} \min\{ N_0,N_2\}^{\frac12}_{\vphantom{3}} N_3^{\frac12} \| u_{N_{0}}^{(0)}\|_{Y^0} \| u_{N_{1}}^{(1)}\|_{Y^0} \| u_{N_{2}}^{(2)}\|_{Y^0}\| u_{N_{3}}^{(3)}\|_{Y^0} \prod_{i\geq 4} N_i^{3/2} \| u_{N_{i}}^{(i)} \|_{Y^0} \\
        \approx&{} \min\{ N_0,N_2\}^{\frac12}_{\vphantom{3}} \frac{N_0^sN_3^{\frac12-s}}{N_1^{s}N_2^s}  \| u_{N_{0}}^{(0)}\|_{Y^{-s}} \| u_{N_{1}}^{(1)}\|_{Y^s} \| u_{N_{2}}^{(2)}\|_{Y^s}\| u_{N_{3}}^{(3)}\|_{Y^s} \prod_{i\geq 4} N_i^{\frac 32-s} \| u_{N_{i}}^{(i)} \|_{Y^s}.
    \end{align}
        For $k=1$, since $s>1/2$, we directly apply Cauchy-Schwarz to the summation over the two lower frequencies and the two highest frequencies respectively to the desired conclusion.
    For $k\geq 2$, applying Cauchy-Schwarz to summation over $N_i$ for $i\geq 4$, we get that
    \begin{align}
        &\min\{ N_0,N_2\}^{\frac12}_{\vphantom{3}} \frac{N_0^sN_3^{s-\frac12}}{N_1^{s}N_2^s}  \| u_{N_{0}}^{(0)}\|_{Y^{-s}} \| u_{N_{1}}^{(1)}\|_{Y^s} \| u_{N_{2}}^{(2)}\|_{Y^s}\| u_{N_{3}}^{(3)}\|_{Y^s} \prod_{i\geq 4} \| u_i\|_{{X^{s}([0,T))}} \\
        \leq{}& \left( \frac{N_0}{N_1} \right)^{s} \left( \frac{ N_3 }{N_2} \right)^{s-\frac12}\| u_{N_{0}}^{(0)}\|_{Y^{-s}} \| u_{N_{1}}^{(1)}\|_{Y^s} \| u_{N_{2}}^{(2)}\|_{Y^s}\| u_{N_{3}}^{(3)}\|_{Y^s} \prod_{i\geq 4} \| u_i\|_{{X^{s}([0,T))}}.
    \end{align}
    Then apply Cauchy-Schwarz to the summation over the two lower frequencies and the two highest frequencies respectively, we get the desired conclusion.

\end{proof}

\subsection{Proof of Theorem \ref{local well-posedness of HNLS on T^3}} The proof is a standard contraction argument as in~\cite{herr2011, Killip2016}. 
Given initial data $\phi \in H^s(\mathbb T^3)$, with $\| \phi \|_{H^s(\mathbb T^3)} \leq A$, suppose $\delta$ is a small constant depending on $A$, and $N$ is a large number depending on $\phi$ and $\delta$ such that $\|P_{>N}\phi \|_{H^s(\mathbb T^3)} \leq \delta$, we will show the Picard iteration mapping given by
\[
\mathcal I(u)(t) := \me^{\mi t\Box}\phi \mp \mi \int_0^t \me^{\mi (t-t') \Box} |u|^{2k}u \dif t'.
\]
is a contraction on the set 
\begin{multline*}
D : = \{ u \in C([0,T);H^s(\mathbb T^3)) \cap X^s([0,T)) \mid \\
u(0)=\phi,\ \|u\|_{X^s([0,T))} \leq 2A,\ \|  P_{>N} u \|_{X^s([0,T))} \leq 2\delta
\},
\end{multline*}
under the metric
\[ d(u,v) := \| u-v \|_{X^s([0,T))} \]
 provided $T$ is chosen sufficiently small (depending on $A$, $\delta$,  $N$ and $k$). 
 
For $u,v\in D$, we can decompose 
\begin{equation}
    |u|^{2k}u-|v|^{2k}v = F_1(u,v) + F_2(u,v),
\end{equation}
where $F_1(u,v)$ is a combination of $u-v,P_{\leq N}u,P_{\leq N}v$, and all terms involving $P_{>N}u,P_{>N}v$ appear in $F_2(u,v)$.
Employing Sobolev embeddings and  \cite[Theorem A.12]{Kenig1993}, we estimate that
\begin{align}
   & \left\| \int_0^t \me^{\mi (t-t')\Box} F_1(u,v) \dif t' \right\|_{X^{s}([0,T))} \leq CT \| F_1(u,v)\|_{L^\infty H^s} \\
    \leq{}& CT \left( \| u-v \|_{L^\infty H^s} \left(\| P_{\leq N} u\|_{L^\infty_{t,x}}^{2k} + \| P_{\leq N} v \|_{L^\infty_{t,x}}^{2k}\right) \right.\\
   &\qquad + N^s \| u-v\|_{L_t^{\infty\vphantom{/}} L_{x\vphantom{t}}^{6/(3-2s)}} \left.\left (\| P_{\leq N} u\|_{L_t^{\infty\vphantom{/}} L_{x\vphantom{t}}^{6k/s}}^{2k}+ \| P_{\leq N} v \|_{L_t^{\infty\vphantom{/}} L_{x\vphantom{t}}^{6k/s}}^{2k} \right)\right) \\
    \leq{}& CTN^{k(3-2s)} (2A)^{2k}\| u-v \|_{{X^{s}([0,T))}} .\label{est:contraction-low}
\end{align}
While by Proposition \ref{k-multilinear estimate on T^3}, it holds that
\begin{align}
   & \left\| \int_0^t \me^{\mi (t-t')\Box} F_2(u,v) \dif t' \right\|_{X^{s}([0,T))} \\ \leq {}&
    C \| u-v\|_{X^s} ( \| P_{>N} u \|_{X^s} + \| P_{>N}v \|_{X^s} )( \| u \|_{X^s} + \| v \|_{X^s} )^{2k-1} \\
    \leq{}& C(2A)^{2k-1}(2\delta) \| u-v\|_{X^{s}([0,T))}. \label{est:contraction-high}
\end{align}
Hence we get that 
\begin{equation}
\left\| \mathcal{I}(u) -  \mathcal{I}(v) \right\|_{X^s} 
\leq  \frac{1}{10} \| u-v\|_{X^s}, \label{contraction on T^3}
\end{equation}
provided $\delta$ is chosen sufficiently small depending on $A, k$, and $T$ is chosen sufficiently small depending on $A,N$ and $k$.

Next we verify that $\mathcal{I}$ maps $D$ into itself. For constant $C$ large enough, we have
\begin{align}
    \left\| \int_0^t \me^{\mi (t-t') \Box } |P_{\leq N} u |^{2k} P_{\leq N} u \dif t' \right\|_{X^s} & \leq CT \left\| |P_{\leq N} u |^{2k} P_{\leq N} u \right\| _{L^\infty H^s} \\&
    \leq C T \| P_{\leq N} u \|_{L^\infty_{t,x}}^{2k} \| P_{\leq N }u\|_{X^s} \\& \leq C T N^{k(3-2s)}(2A)^{2k+1} ,\label{est:Duhamel low}
\end{align}
and apply \eqref{contraction on T^3} for $v=P_{\leq N}u$ to get that
\begin{align}
    \| \mathcal{I}(u) - \mathcal{I}(P_{\leq N}u) \|_{X^s} \leq \frac{1}{10} \| P_{>N} u \|_{X^s} \leq \frac{\delta}{5}.\label{est:diffwithlow}
\end{align}
To control $P_{>N} \mathcal{I}(u)$, notice at least one input in the nonlinear term should have high frequency $\frac{N}{2k+1}$, thus applying \eqref{est:Duhamel low}\eqref{est:diffwithlow} we get 
\begin{align}
\| P_{>N} \mathcal{I}(u)\|_{X^s}\lesssim \|P_{>N}\me^{\mi t\Box}\phi\|_{X^s}+  \|P_{>N}\Big(\mathcal{I}(u) - \mathcal{I}(P_{\leq \frac{N}{2k+1}}u)\Big) \|_{X^s}\leq 2\delta.
\end{align}

To summarize, 
provided $\delta$ is chosen sufficiently small depending on $A, k$, and $T$ is chosen sufficiently small depending on $A, N$ and $k$, we have
\[ \| \mathcal I(u)\|_{X^s} \leq \| \me^{\mi t\Box}\phi \|_{X^s} +A\leq 2A,\quad  \| P_{>N} \mathcal I(u)\|_{X^s} \leq  \|P_{>N} \me^{\mi t\Box}\phi \|_{X^s} +\delta \leq 2\delta.\]

 As for the uniqueness  in the whole space $C([0,T);H^s(\mathbb T^3)) \cap X^s([0,T))$, supposing that we have two functions $u,v$ which both solve the equations \eqref{HNLS on T3} with the same initial data $\phi$, we can choose $A'$ sufficiently large, $\delta'$ sufficiently small and $N'$ sufficiently large such that $u,v$ are both contained in some $D=D_{A',N',\delta'}$. By the iteration, we know that there exists some $T'$ (maybe much smaller than $T$ given above) such that $u(t)=v(t)$ for $t\in[0,T')$.
 Uniqueness in the whole space $C([0,T);H^s(\mathbb T^3)) \cap X^s([0,T))$ follows from a continuity argument.

\subsection{Proof of Theorem \ref{ill-posedness of HNLS on T^3}}
We prove the ill-posedness of the cubic HNLS on $H^{1/2}(\mathbb T^3)$ by showing the first Picard iteration is unbounded.  Let us   pick
\begin{equation}
    \phi_N(x) = \sum_{k=1}^N \frac{ \me^{2\pi\mi(k,k,0)\cdot x}}{k}.
\end{equation}
It is easy to see $\| \phi_N \|_{H^{1/2}(\mathbb T^3)} \approx (\log N)^{1/2}$. Notice that $\Box \phi_N = 0$, so $\phi_N$ (also $|\phi_N|^{2}\phi_N$) is invariant under the group $\{ \me^{\mi t\Box} \}_{t\in\mathbb R}$, thus
\begin{equation}
    \mathcal{I}(\me^{\mi t\Box} \phi_N) (t) =  \mathcal{I}(\phi_N)(t) = \phi_N \pm \mi t |\phi_N|^{2}\phi_N. \label{1st-iteration}
\end{equation}
It suffices to show that 
$  \| |\phi_N|^2 \phi_N \|_{H^{1/2}(\mathbb T^3)} \gtrsim \log N \| \phi_N \|^3_{H^{1/2}(\mathbb T^3)}  $. Since
\begin{equation}
    | \phi_N|^2\phi_N(x) = \sum_k \me^{2\pi\mi (k,k,0)\cdot x} \sum_{k_1-k_2+k_3=k} \frac{1}{k_1k_2k_3},
\end{equation}
we consider the set 
\[ 
\Gamma(k) = \{ (k_1,k_2,k_3) \in \mathbb{Z}^3 \mid k_3= k-k_1+k_2,\ 1\leq k_1,k_2 \leq k/4 \}
\]
 for $k$ positive and sufficiently large. Then $k/2 \leq k_3 \leq 3k/2$ for $(k_1,k_2,k_3)\in \Gamma(k)$. Hence
\[ \sum_{\substack{{k_1-k_2+k_3=k}\\ 1\leq k_i \leq N}} \frac{1}{k_1k_2k_3} \geq \sum_{\Gamma(k)}\frac{1}{k_1k_2k_3} \approx \frac{1}{k} \sum_{k_1=1}^{k/4} \frac{1}{k_1} \sum_{k_2=1}^{k/4}\frac{1}{k_2} \approx \frac{(\log k)^2}{k}, \]
for $1\lesssim k\lesssim N$, and
\[ \Big\| \phi_N |\phi_N|^2 \Big\|_{H^{1/2}(\mathbb{T}^2)} \gtrsim \left( \sum_{1\lesssim k\lesssim N} k\cdot \frac{(\log k)^4}{k^2}\right)^{1/2} \approx (\log N)^{5/2}\approx \log N \| \phi_N \|^3_{H^{1/2}(\mathbb T^3)}, \]
 this finishes the proof.~\footnote{
The construction presented here is essentially two-dimensional. Therefore, it can also be used to  prove the ill-posedness of the 2D cubic HNLS for initial data in $H^{1/2}(\mathbb{T}^2)$, in the same sense as Theorem~\ref{ill-posedness of HNLS on T^3}.}

\bibliographystyle{plain}
\bibliography{_ref}

\begin{thebibliography}{10}

\bibitem{Plasma-1}
M.~Ablowitz and S.~Butler.
\newblock Nonlinear temporal-spatial surface plasmon polaritons.
\newblock {\em Optics Communications}, 330:49–55, 2014.

\bibitem{Apfelbaum-Sharir2005}
Roel Apfelbaum and Micha Sharir.
\newblock Repeated angles in three and four dimensions.
\newblock {\em SIAM Journal on Discrete Mathematics}, 19(2):294--300, 2005.

\bibitem{Wang2025-1}
Engin Başakoğlu, Tadahiro Oh, and Yuzhao Wang.
\newblock Sharp unconditional well-posedness of the 2-$d$ periodic cubic hyperbolic nonlinear schr\"odinger equation, 2025.

\bibitem{Wang2025-2}
Engin Başakoğlu, Chenmin Sun, Nikolay Tzvetkov, and Yuzhao Wang.
\newblock Hyperbolic nonlinear schr\"odinger equations on $\mathbb{R}\times \mathbb{T}$, 2025.

\bibitem{Plasma-4}
L.~Bergé.
\newblock Wave collapse in physics: principles and applications to light and plasma waves.
\newblock {\em Phys.Rep.}, 303:259–370, 1998.

\bibitem{Bourgain1993}
Jean Bourgain.
\newblock Fourier transform restriction phenomena for certain lattice subsets and applications to nonlinear evolution equations. {I}. {S}chr\"{o}dinger equations.
\newblock {\em Geometric and Functional Analysis}, 3(2):107--156, 1993.

\bibitem{bour2015}
Jean Bourgain and Ciprian Demeter.
\newblock The proof of the {$l^2$} decoupling conjecture.
\newblock {\em Annals of Mathematics. Second Series}, 182(1):351--389, 2015.

\bibitem{bourgain2017}
Jean Bourgain and Ciprian Demeter.
\newblock Decouplings for curves and hypersurfaces with nonzero {G}aussian curvature.
\newblock {\em Journal d'Analyse Math\'{e}matique}, 133:279--311, 2017.

\bibitem{MR1032370}
Kenneth~L. Clarkson, Herbert Edelsbrunner, Leonidas~J. Guibas, Micha Sharir, and Emo Welzl.
\newblock Combinatorial complexity bounds for arrangements of curves and spheres.
\newblock {\em Discrete Comput. Geom.}, 5(2):99--160, 1990.

\bibitem{Craig}
Walter Craig, Ulrich Schanz, and Catherine Sulem.
\newblock The modulational regime of three-dimensional water waves and the {D}avey-{S}tewartson system.
\newblock {\em Annales de l'Institut Henri Poincar\'{e} C. Analyse Non Lin\'{e}aire}, 14(5):615--667, 1997.

\bibitem{DS}
A.~Davey and K.~Stewartson.
\newblock On three-dimensional packets of surface waves.
\newblock {\em Proc. Roy. Soc. London Ser. A}, 338:101--110, 1974.

\bibitem{Saut}
Jean-Michel Ghidaglia and Jean-Claude Saut.
\newblock On the initial value problem for the {D}avey-{S}tewartson systems.
\newblock {\em Nonlinearity}, 3(2):475--506, 1990.

\bibitem{Tzvetkov}
Nicolas Godet and Nikolay Tzvetkov.
\newblock Strichartz estimates for the periodic non-elliptic {S}chr\"{o}dinger equation.
\newblock {\em Comptes Rendus Math\'{e}matique. Acad\'{e}mie des Sciences. Paris}, 350(21-22):955--958, 2012.

\bibitem{hadac2009}
Martin Hadac, Sebastian Herr, and Herbert Koch.
\newblock Well-posedness and scattering for the {KP-II} equation in a critical space.
\newblock {\em Annales de l'Institut Henri Poincar{\'e} C, Analyse Non Lin{\'e}aire}, 26(3):917--941, 2009.

\bibitem{Herr-Kwak}
Sebastian Herr and Beomjong Kwak.
\newblock Strichartz estimates and global well-posedness of the cubic {NLS} on {$\mathbb{T}^2$}.
\newblock {\em Forum of Mathematics, Pi}, 12:e14, 2024.

\bibitem{herr2011}
Sebastian Herr, Daniel Tataru, and Nikolay Tzvetkov.
\newblock Global well-posedness of the energy-critical nonlinear { Schr\"odinger } equation with small initial data in { $H^1 (\mathbb T^3) $ }.
\newblock {\em Duke Mathematical Journal}, 159(2):329--349, 2011.

\bibitem{Kenig1993}
Carlos~E. Kenig, Gustavo Ponce, and Luis Vega.
\newblock Well-posedness and scattering results for the generalized {K}orteweg-de {V}ries equation via the contraction principle.
\newblock {\em Communications on Pure and Applied Mathematics}, 46(4):527--620, 1993.

\bibitem{Killip2016}
Rowan Killip and Monica Vi{\c{s}}an.
\newblock Scale invariant {S}trichartz estimates on tori and applications.
\newblock {\em Mathematical Research Letters}, 23(2):445–472, 2016.

\bibitem{Plasma-2}
J.~Myra and C.~Liu.
\newblock Self-modulation of ion bernstein waves.
\newblock {\em Phys. Fluids}, 23:2258–2264, 1980.

\bibitem{Pach-Sharir2004}
J\'anos Pach and Micha Sharir.
\newblock Geometric incidences.
\newblock In {\em Towards a theory of geometric graphs}, volume 342 of {\em Contemp. Math.}, pages 185--223. Amer. Math. Soc., Providence, RI, 2004.

\bibitem{Plasma-3}
N.~Pereira, A.~Sen, and A.~Bers.
\newblock Nonlinear development of lower hybrid cones.
\newblock {\em Phys. Fluids}, 21:117–120, 1978.

\bibitem{Saut2024hnls}
Jean-Claude Saut and Yuexun Wang.
\newblock On the hyperbolic nonlinear {S}chr\"odinger equations.
\newblock {\em Adv. Contin. Discrete Models}, pages Paper No. 15, 12, 2024.

\bibitem{Szemeredi-Trotter}
Endre Szemer\'edi and William~T. Trotter, Jr.
\newblock Extremal problems in discrete geometry.
\newblock {\em Combinatorica}, 3(3-4):381--392, 1983.

\bibitem{Totz2015cmp}
Nathan Totz.
\newblock A justification of the modulation approximation to the 3d full water wave problem.
\newblock {\em Communications in Mathematical Physics}, 335(1):369--443, 2015.

\bibitem{wang2013}
Yuzhao Wang.
\newblock Periodic cubic { Hyperbolic Schr\"odinger } equation on {$\mathbb T^ 2$}.
\newblock {\em Journal of Functional Analysis}, 265(3):424--434, 2013.

\bibitem{ZK}
V.E. Zakharov and E.A. Kuznetsov.
\newblock Hamiltonian formalism for nonlinear waves.
\newblock {\em Phys.Usp.}, 40(11):1087–1116, 1997.

\end{thebibliography}

\end{document}